\newtheorem{theorem}{Theorem}[section]
\newtheorem{lemma}[theorem]{Lemma}
\newtheorem{proposition}[theorem]{Proposition}
\newtheorem{corollary}[theorem]{Corollary}
\newtheorem{definition}[theorem]{Definition}
\newtheorem{example}[theorem]{Example}
\begin{document}
	
\title[Absolute matrix order ideals in absolute matrix order unit spaces]{Absolute matrix order ideals in absolute Matrix order unit spaces}
\author{Amit Kumar}
	
\address{Discipline of Mathematics, School of Basic Sciences, Indian Institute of Technology Bhubaneswar, Argul, Bhubaneswar, Pin - 752050, Odisha (State), India.}

\email{\textcolor[rgb]{0.00,0.00,0.84}{amit231291280895@gmail.com}} 

\subjclass[2010]{Primary 46B40; Secondary 46L05, 46L30.}
	
\keywords{Absolute matrix order unit space, absolute matrix order unit property, absolute matrix order ideal, completely absolute value preserving map, order projection, partial isometry, $\mathcal{K}_0$-group.}

\begin{abstract}
In this paper, we define and study absolute matrix order ideals in absolute matrix order unit spaces. As an application of absolute matrix order unit property, we construct some kinds of absolute matrix order ideals in absolute matrix order unit spaces. Later, we show that the Grothendieck group of a such kind of absolute matrix order ideal for order projections is a subgroup of Grothendieck group of corresponding absolute matrix order unit space for order projections.
\end{abstract}

\thanks{The author was financially supported by the Institute Post-doctoral Fellowship of IIT Bhubaneswar, India.}

\maketitle

\section{Introduction}

The concept of Hereditary sub-$C^*$-algebras is well known in a $C^*$-algebra theory. Let $\mathbb{A}$ be a $C^*$-algebra. A sub-$C^*$-algebra $\mathbb{B}$ of $\mathbb{A}$ is said to be Hereditary sub-$C^*$-algebra if it satisfies the following condition: $a\in \mathbb{B}$ for all $a\in \mathbb{A}^+$ and $b\in \mathbb{B}^+$ such that $a\leq b.$ This concept helped a lot in the study of certain notions in a $C^*$-algebra. For example, the notions of minimal, maximal, purely infinite and central projections in $C^*$-algebras are defined and studied in the terms of Hereditary sub-$C^*$-algebras. In this regard, we refer to see \cite{B98, B06, RLL00, WO93} and references therein.

A tiny version of the notion of Hereditary sub-$C^*$-algebras is also defined and studied in real ordered vector spaces that is known as the notion of order ideals. Let $\mathbb{X}$ be a real ordered vector space. A vector subpace $\mathbb{Y}$ of $\mathbb{X}$ is said to be an order ideal if it satisfies the following condition: $x\in \mathbb{Y}$ for all $x\in \mathbb{X}^+$ and $y\in \mathbb{Y}^+$ such that $x\leq y.$ Since the self adjoint part $\mathbb{A}_{sa}$ of $\mathbb{A}$ is a real ordered vector space, the notion of order ideals in ordered vector spaces is a genealization of the notion of Hereditary sub-$C^*$-algebra in $C^*$-algebras. For definition and more informations about order ideals, see \cite{MA71, J72, WN73}.

Order strucure of $C^*$-algebras has its own importance. It is very rich with certain properties and plays an important role in the classification of certain kind of $C^*$-algebras. Therefore the study of order structure becomes necessary. Many researchers started working in this direction. In this regard, we refer to see \cite{GN, Kad51, Kak, LA, LA73, A74, B54, B56, E64, JE64, KFN, GKP} and references therein. The notions of matricially normed spaces defined and studied by Ruan \cite{ER88, ZJR88}, and matrix ordered and matrix order unit spaces defined and studied by Choi and Effros \cite{CE77}, have high impact in this direction. The unital $C^*$-algebras are examples of such spaces. However, the non-unital versions of such spaces has been taken care by Karn \cite{KV97, KRV97, KV98, KV00} and Schreiner \cite{Schr98}. Such spaces are called $L^\infty$-matricially Riesz normed spaces and  matrix regular operator spaces by Karn and Schreiner respectively. The non-unital $C^*$-algebras are examples of such spaces. 

More than two decades, Karn is also working in direction. In fact, he is working on the order theoretic aspects of $C^*$-algebras. He has published several papers in this direction. We refer to see \cite{AK02, KV97, K10, K14, K16, K18}. In \cite{AK02}, he defined and studied order unit and matrix order unit properties in $L^\infty$-matricially Riesz normed spaces. He found that only projections satisfy order unit property in $C^*$-algebras \cite[Theorem 3.4]{AK02}. Thus, by help of order unit property, he characterized hereditary sub-$C^*$-algebras in $C^*$-algebras. He has also studied matrix order ideals in matrix order unit spaces which is found to be an abstract version of hereditary sub-$C^*$-algebras. He gave a characrterization of matrix order ideals in terms of order unit property \cite[Proposition 2.1]{AK02}. In \cite{K18}, he defined and studied a non-commutative vector lattice structure called as absolute order unit spaces. Unital $C^*$-algebras, unital $JB$-algebras and unital $AM$-spaces are some examples of such spaces. He defined and studied order unit property in absolute order unit spaces. He also defined and studied absolute version of order unit property in absolute order unit spaces namely absolute order unit property. Later, he introduced and studied the notion of order projections in absolute order unit spaces and gave a characterization in terms of absolute order unit property \cite[Proposition 6.1]{K18}. 

The author in the collaboration with Karn has also been starting working in this direction (see \cite{K19,PI19,KO21}). In \cite{K19}, he defined and studied the notion of absolute matrix order unit spaces. This notion is a matricial version of absolute order unit spaces. The unital $C^*$-algebras and $JB$-algebras are primary examples of these spaces. He defined and studied the notion of absolute value preserving maps on these spaces. He found that under certain conditions, these maps are isometries \cite[Theorems 3.3 and 4.6]{K19}. He also defined the notion of absolute order ideals and found that kernels of absolute value preserving maps forms absolute order ideals \cite[Theorem 2.7]{K19}. In \cite{PI19}, he generalized the notion of order projections in absolute matrix order unit spaces and, defined and characterized some variants of order projections \cite[Theorems 5.1 and 5.2]{PI19}. In \cite{KO21}, he defined and studied some equivalences on order projections in absolute matrix order unit spaces. By help of these equivalences, he formed $\mathcal{K}_0$-group, the Grothendieck group for order projections \cite[Theorem 4.8]{KO21}. He proved that $\mathcal{K}_0$-group is an ordered abelian and a covariant functor \cite[Corollaries 4.11 and 5.3]{KO21}.  

We start our research work in the continuation of our earlier work \cite{K19,PI19,KO21}. In this paper, we define and study absolute matrix order unit property in absolute matrix order unit spaces (Definition \ref{10} and Proposition \ref{11}). The absolute matrix order unit property in absolute matrix order unit spaces is a generalization of absolute order unit property in absolute order unit spaces. We also define the notion of absolute matrix order and absolute matrix order unit ideals in absolute order unit spaces (Definition \ref{12}). As an application of absolute matrix order unit propery, we construct absolute matrix order ideals in absolute matrix order unit spaces (Theorem \ref{4}). Further, as an application of absolute matrix order unit property, we also show that the quotients spaces by kernels of completely absolutely preserving maps can also be seen as absolute matrix order unit ideals (Theorem \ref{13}). Later, we prove that under ceratin condition the Grothendieck group of absolue matrix order unit ideal is subgroup of Grothendieck group of corresponding absolute matrix order unit space (Theorem \ref{14}). 

\section{Preliminaries}

Throughout this paper, $\mathbb{X}$ stands for a complex vector space and $\mathbb{M}_{l,m}(\mathbb{X})$ for the set of all the $l\times m$ matrices over $\mathbb{X}.$ Then $\mathbb{M}_{l,m}(\mathbb{X})$ becomes a vector space over $\mathbb{C}$ under the operations entry-wise addition and entry-wise scalar multiplication. If $\mathbb{X}=\mathbb{C},$ then we write $\mathbb{M}_{m,n}$ for $\mathbb{M}_{l,m}(\mathbb{C}).$ The zero element in $\mathbb{M}_{m,n}$ is denoted by $0^{l,m}$ and for $l=m,$ the zero element in $\mathbb{M}_l(\mathbb{X})$ is denoted by $0^l.$ Sometimes, we drop the subscripts and write zero element in $\mathbb{M}_{l,m}$ by $0$ only. In fact, $\mathbb{M}_l(\mathbb{X})$ is $\mathbb{M}_l$-bimodule under the following matrix multiplications: for $x =[x_{i,j}]\in \mathbb{M}_{m,n}(\mathbb{X}),$ we have $\varsigma x = \begin{bmatrix} \displaystyle \sum_{l=1}^m \varsigma_{i,l}x_{l,j}\end{bmatrix}\in \mathbb{M}_{l,n}(\mathbb{X})$ if $\varsigma=[\varsigma_{i,j}] \in \mathbb{M}_{l,m}$ and $x \varsigma = \begin{bmatrix} \displaystyle \sum_{l=1}^n x_{i,l} \varsigma_{l,j} \end{bmatrix}\in \mathbb{M}_{m,s}(\mathbb{X})$ if $\varsigma \in \mathbb{M}_{n,s}.$ Given $x \in \mathbb{M}_{l,m}(\mathbb{X}),y \in \mathbb{M}_{n,s}(\mathbb{X}),$ we write: 
\begin{center}
$x \circledast y = \begin{bmatrix} x & 0 \\ 0 & y \end{bmatrix}\in \mathbb{M}_{l+n,m+s}(\mathbb{X}).$
\end{center}
We also write: $x^l=x\circledast x\circledast \cdots \circledast x \in \mathbb{M}_{lm}(\mathbb{X})$ for every $x \in \mathbb{M}_m(\mathbb{X}).$

For origin of such matricial notions, we refer to see \cite{ZJR88}.

We start with the definition of absolutely matrix ordered spaces, a matricial version of absolutely ordered spaces. Absolutely ordered spaces were introduced and studied by Karn in \cite{K18}. It is very near to a lattice structure.  
\begin{definition}[\cite{K19}, Definition 4.1]\label{5}
Let $(\mathbb{X}, \lbrace \ \mathbb{M}_l(\mathbb{X})^+ \rbrace)$ be a matrix ordered space and also let $\vert\cdot\vert_{l,m}: \mathbb{M}_{l,m}(\mathbb{X}) \to \mathbb{M}_m(\mathbb{X})^+$ be functions for each pair $l, m \in \mathbb{N}$. For $l=m,$ we write $\vert\cdot\vert_{l,m} = \vert\cdot\vert_l.$ Then $\left(\mathbb{X}, \lbrace \mathbb{M}_m(\mathbb{X})^+ \rbrace, \lbrace \vert\cdot\vert_{l,m} \rbrace \right)$ is said to be an \emph{absolutely matrix ordered space}, if the following conditions are satisfied: 
\begin{enumerate}
\item[(1)] $(\mathbb{M}_m(\mathbb{X})_{sa},\mathbb{M}_m(\mathbb{X})^+, \vert\cdot\vert_m)$ is an absolutely ordered space for every $m \in \mathbb{N}.$
\item[(2)] For all $x \in \mathbb{M}_{m,n}(\mathbb{X}),y \in \mathbb{M}_{l,s}(\mathbb{X}), \varsigma_1 \in \mathbb{M}_{l,m}$ and $\varsigma_2 \in \mathbb{M}_{n,s},$ we have
\begin{enumerate}
\item[(a)] $\vert \varsigma_1 x \varsigma_2 \vert_{l,s} \leq \| \varsigma_1 \| \vert \vert x \vert_{m,n} \varsigma_2 \vert_{n,s}.$ 
\item[(b)] $\vert x \circledast y\vert_{m+l,n+s} = \vert x \vert_{m,n} \circledast \vert y \vert_{l,s}.$ 
\end{enumerate}
\end{enumerate} 
\end{definition}

\begin{example}[\cite{K19}, Example 4.4]
Given a $C^*$-algebra $\mathbb{A},$ we know that $\mathbb{M}_m(\mathbb{A})$ is also a $C^*$-algebra. Next, let $\mathbb{M}_m(\mathbb{A})^+$ be the set of all the positive elements contained in $\mathbb{M}_m(\mathbb{A}).$ Then $(\mathbb{A},\lbrace \mathbb{M}_m(\mathbb{A})^+\rbrace)$ becomes a matrix ordered space and $(\mathbb{A},\lbrace \mathbb{M}_m(\mathbb{A})^+\rbrace,\lbrace \vert \cdot \vert_{l,m}\rbrace)$ becomes an absolutely matrix ordered space, where each $\vert \cdot \vert_{l,m}:\mathbb{M}_{l,m}(\mathbb{A})\to \mathbb{M}_m(\mathbb{A})^+$ is defined by $\vert x\vert_{l,m}= \sqrt{x^*x}$ for $x\in \mathbb{M}_{l,m}(\mathbb{A}).$ 
\end{example}

The following result describes some properties of absolutely matrix ordered spaces.

\begin{proposition}[\cite{K19}, Proposition 4.2]\label{9}
In every absolutely matrix ordered space $(\mathbb{X},\lbrace \mathbb{M}_m(\mathbb{X})^+ \rbrace,\lbrace \vert\cdot\vert_{l,m} \rbrace),$ for each $l,m,n,s\in \mathbb{N}$ and $x \in \mathbb{M}_{m,n}(\mathbb{X}),\varsigma \in \mathbb{M}_{l,m}$ with $\varsigma^*\varsigma = \mathbb{I}_m~i.e.~\varsigma$ is an isometry, the following hold:
\begin{enumerate}
\item[(1)] $\vert \varsigma x \vert_{l,n} = \vert x \vert_{m,n}.$ 
\item[(2)] $\left\vert \begin{bmatrix} 0 & x \\ x^* & 0 \end{bmatrix} \right\vert_{m+n} = \vert x^* \vert_{n,m} \circledast \vert x \vert_{m,n}.$
\item[(3)] $\begin{bmatrix} \vert x^* \vert_{n,m} & x \\ x^* & \vert x \vert_{m,n} \end{bmatrix} \in \mathbb{M}_{m+n}(\mathbb{X})^+.$ 
\item[(4)] $\vert x \vert_{m,n} = \left\vert \begin{bmatrix} x \\ 0 \end{bmatrix} \right\vert_{m+l,n}$ and $\vert x \vert_{m,n} \circledast 0_s = \left\vert \begin{bmatrix} x & 0 \end{bmatrix} \right\vert_{m,n+s}.$
\item[(5)] $($\cite{PI19}, Lemma 3.1$).$ Moreover, if $l=m=n$ (in this case, $\varsigma$ is unitary), we also have: $\vert \varsigma^* x \varsigma \vert_n = \varsigma^* \vert x \vert_n \varsigma.$
\end{enumerate} 
\end{proposition}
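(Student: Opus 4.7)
My plan is to verify the five items sequentially, leveraging the defining axioms of Definition \ref{5} in each step. The guiding principle is that axiom (2)(a) yields both directions of an inequality whenever an isometric ``completion'' is available on both sides; since each of (2)--(5) builds on (1), it is crucial to prove (1) first.

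For (1), the idea is to apply (2)(a) once with $\varsigma_2 = \mathbb{I}_n$ (using $\|\varsigma\|=1$) to get $\vert\varsigma x\vert_{l,n}\leq\vert x\vert_{m,n}$, then to rewrite $x=\varsigma^*(\varsigma x)$ and apply (2)(a) again (using $\|\varsigma^*\|=1$) to obtain the reverse inequality. For (2), I would express the anti-diagonal block matrix as $P(x^*\circledast x)$, where $P=\begin{bmatrix}0^{m,n}&\mathbb{I}_m\\ \mathbb{I}_n&0^{n,m}\end{bmatrix}$ satisfies $P^*P=\mathbb{I}_{n+m}$; then (1) strips off $P$, while axiom (2)(b) splits the absolute value of the direct sum. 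For (3), I would use the identity just proved in (2) together with the absolutely ordered space axiom $\vert z\vert+z\geq 0$ applied to the self-adjoint element $z=\begin{bmatrix}0&x\\ x^*&0\end{bmatrix}$; substituting the formula from (2) gives precisely the asserted positivity.

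Part (4) consists of two further applications of (1). The first identity falls out by taking the canonical isometry $\begin{bmatrix}\mathbb{I}_m\\ 0^{l,m}\end{bmatrix}\in \mathbb{M}_{m+l,m}$. For the second identity, I would pass through $x\circledast 0_s$ via the isometry $\begin{bmatrix}\mathbb{I}_m\\ 0^{s,m}\end{bmatrix}\in \mathbb{M}_{m+s,m}$, whose product with $[x,\,0^{m,s}]$ is $x\circledast 0_s$, and then combine axiom (2)(b) with the fact that $\vert 0_s\vert = 0_s$ to conclude.

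The subtle step is (5), and this is where I expect the main obstacle. A direct application of (2)(a) with $(\varsigma_1,\varsigma_2)=(\varsigma^*,\varsigma)$ only furnishes an upper bound of the form $\vert\,\vert x\vert_n\varsigma\,\vert_n$, not the desired $\varsigma^*\vert x\vert_n\varsigma$. Upgrading this to equality requires using that conjugation $y\mapsto\varsigma^* y\varsigma$ by a unitary is an order isomorphism of $\mathbb{M}_n(\mathbb{X})_{sa}$ which preserves the positive/negative decomposition, and therefore commutes with the absolute value supplied by the absolutely ordered structure on this self-adjoint part. Rather than rederive this, I would invoke \cite[Lemma~3.1]{PI19} in accordance with the citation in the statement.
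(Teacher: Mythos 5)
This proposition is recalled in the paper's preliminaries from \cite[Proposition 4.2]{K19} (and \cite[Lemma 3.1]{PI19} for item (5)); the paper itself contains no proof to compare against. Judged on its own terms, your reconstruction is the standard derivation from Definition \ref{5} and is essentially correct: the two-sided use of axiom (2)(a) for (1) (together with $\|\varsigma\|=\|\varsigma^*\|=1$ and $|b|_n=b$ for $b\geq 0$), the permutation-matrix factorization plus (2)(b) for (2), the absolutely ordered space inequality $|z|+z\geq 0$ for (3), and the two canonical isometries for (4) all go through. The only soft spot is your heuristic for (5): an order isomorphism of $\mathbb{M}_n(\mathbb{X})_{sa}$ does not automatically commute with $|\cdot|_n$, since a decomposition of a self-adjoint element as a difference of positives is unique only when the two parts are orthogonal; what is actually needed is that unitary conjugation preserves the orthogonality relation $\perp$, so that $(\varsigma^*x\varsigma)^{\pm}=\varsigma^*x^{\pm}\varsigma$. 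That is precisely the content of \cite[Lemma 3.1]{PI19}, so deferring to it, as both you and the statement do, is appropriate, but the phrase ``order isomorphism \ldots therefore commutes with the absolute value'' should not be read as a proof.
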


Next, we write the definition of absolute matrix order unit spaces, a matricial version of absolute order unit spaces. Absolute order unit spaces were introduced and studied by Karn in \cite{K18}.

\begin{definition}[\cite{K19}, Definition 4.3]
Let $(\mathbb{X}, \lbrace \mathbb{M}_m(\mathbb{X})^+ \rbrace, e)$ be a matrix order unit space and also let $\vert \cdot \vert
_{l,m}:\mathbb{M}_{l,m}(\mathbb{X})\to \mathbb{M}_m(\mathbb{X})^+$ be a function for each pair $l,m \in \mathbb{N}.$ If $\left(\mathbb{X}, \lbrace \mathbb{M}_m(\mathbb{X})^+ \rbrace, \lbrace \vert \cdot \vert_{l,m} \rbrace \right)$ forms an absolutely matrix ordered space and $\perp = \perp_{\infty}^a$ on $\mathbb{M}_m(\mathbb{X})^+$ for all $m \in \mathbb{N},$ then we say that $(\mathbb{X}, \lbrace \mathbb{M}_m(\mathbb{X})^+ \rbrace, e)$ with $\lbrace \vert\cdot\vert_{l,m} \rbrace$ is an \emph{absolute matrix order unit space}. The absolute matrix order unit space is denoted by $(\mathbb{X}, \lbrace \mathbb{M}_m(\mathbb{X})^+ \rbrace, \lbrace \vert\cdot\vert_{l,m} \rbrace, e).$
\end{definition}

\begin{example}[\cite{K19}, Example 4.4]
Given any $C^*$-algebra $\mathbb{A},$ we have $\perp = \perp_\infty^a$ on $\mathbb{M}_m(\mathbb{A})^+$ for every $m \in \mathbb{N}.$ Therefore, every unital $C^*$-algebra forms an absolute matrix order unit space.
\end{example}

Now, we recall the definitions of order unit property and absolute order unit propertie introduced and studied by Karn in \cite{K18}.

\begin{definition}[\cite{K18}, Definition 5.1]
Let $(\mathbb{X},e)$ be an absolute matrix order unit space and $x \in \mathbb{M}_l(\mathbb{X})^+$. We write: $\mathbb{M}_l(\mathbb{X})_{sa}^x=\lbrace y\in \mathbb{M}_l(\mathbb{X})_{sa}: \epsilon x\pm y \in \mathbb{M}_l(\mathbb{X})^+ ~ \textrm{for some $\epsilon > 0$}~\rbrace.$ Then $x \in \mathbb{M}_l(\mathbb{X})^+$ is said to have the \emph{order unit property} in $\mathbb{M}_l(\mathbb{X})_{sa}$ provided $\pm y \leq \Vert y\Vert x$ for every $y\in \mathbb{M}_l(\mathbb{X})_{sa}^x.$  Moreover, $x$ is said to have the \emph{absolute order unit property} in $\mathbb{M}_l(\mathbb{X})_{sa}$ provided $\vert y\vert \leq \Vert y\Vert x$ for every $y\in \mathbb{M}_l(\mathbb{X})_{sa}^x.$ 
\end{definition}

The sets of all the elements having order unit property and absolute order unit property in  $\mathbb{M}_l(\mathbb{X})_{sa}$ are denoted by $\mathcal{M}_l(\mathbb{X})$ and $\vert \mathcal{M}\vert_l(\mathbb{X})$ respectively. For $l=1,$ we write: $\mathcal{M}_1(\mathbb{X})=\mathcal{M}(\mathbb{X})$ and $\vert \mathcal{M}\vert_1(\mathbb{X})=\vert \mathcal{M}\vert(\mathbb{X}).$

We explain the notions of order projections and partial isometries in absolute matrix order unit spaces. These notions are generalizations of the notions of projections and partial isometries in a $C^*$-algebra respectively .

\begin{definition}[\cite{PI19}, Definition 3.1]
Let $\mathbb{X}$ be an absolute matrix order unit space. We say that $x \in \mathbb{M}_n(\mathbb{X})$ is an \emph{order projection} provided $x \in \mathbb{M}_n(\mathbb{X})_{sa}$ with $\vert 2 x - e^n \vert_n = e^n.$ Next, if $x \in \mathbb{M}_{m,n}(\mathbb{X}),$ then we say that $x$ is a \emph{partial isometry} provided $\vert x \vert_{m,n} \in \mathbb{OP}_n(\mathbb{X})$ and $\vert x^* \vert_{n,m} \in \mathbb{OP}_m(\mathbb{X}).$ 
\end{definition}

The notations $\mathbb{OP}_n(\mathbb{X})$ and $\mathbb{PI}_{m,n}(\mathbb{X})$ stand for the sets of all the order projections and all the partial isometries in $\mathbb{M}_n(\mathbb{X})$ and $\mathbb{M}_{m,n}(\mathbb{X})$ respectively. We write: $\mathbb{PI}_{m,n}(\mathbb{X})=\mathbb{PI}_n(\mathbb{X})$ for $m=n.$ We also write: $\mathbb{OP}_1(\mathbb{X})=\mathbb{OP}(\mathbb{X})$ and $\mathbb{PI}_1(\mathbb{X})=\mathbb{PI}(\mathbb{X}).$

Next result recalls an elementary property of order projections. 

\begin{lemma}\cite[Proposition 3.2]{PI19}\label{6}
In an absolute matrix order unit space $\mathbb{X},$ we have: $p\circledast q \in \mathbb{OP}_{l+m}(\mathbb{X})$ if and only if  $p\in \mathbb{OP}_l(\mathbb{X})$ and $q \in \mathbb{OP}_m(\mathbb{X}).$ 
\end{lemma}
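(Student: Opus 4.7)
The plan is to reduce the defining equation of an order projection for $p \circledast q$ to the defining equations for $p$ and $q$ separately, by using the multiplicativity of $|\cdot|$ under the $\circledast$ operation (Definition \ref{5}(2b)) and the fact that the block-diagonal decomposition $\mathbb{M}_l(\mathbb{X}) \circledast \mathbb{M}_m(\mathbb{X}) \hookrightarrow \mathbb{M}_{l+m}(\mathbb{X})$ is entry-wise uniquely determined.

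The first step is the algebraic identity
\[
2(p \circledast q) - e^{l+m} \;=\; (2p - e^l) \circledast (2q - e^m),
\]
which follows directly from the definition of $\circledast$ together with $e^{l+m} = e^l \circledast e^m$. Applying $|\cdot|_{l+m}$ and invoking part (b) of condition (2) in Definition \ref{5} yields
\[
\bigl| 2(p \circledast q) - e^{l+m} \bigr|_{l+m} \;=\; |2p - e^l|_l \,\circledast\, |2q - e^m|_m.
\]
Similarly, self-adjointness passes through: $p \circledast q \in \mathbb{M}_{l+m}(\mathbb{X})_{sa}$ if and only if $p \in \mathbb{M}_l(\mathbb{X})_{sa}$ and $q \in \mathbb{M}_m(\mathbb{X})_{sa}$, since adjoint is taken entry-wise.

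The reverse direction is then immediate: if $p$ and $q$ are order projections, the displayed identity gives $|2(p \circledast q) - e^{l+m}|_{l+m} = e^l \circledast e^m = e^{l+m}$, so $p \circledast q \in \mathbb{OP}_{l+m}(\mathbb{X})$. For the forward direction, assuming $p \circledast q \in \mathbb{OP}_{l+m}(\mathbb{X})$ yields
\[
|2p - e^l|_l \,\circledast\, |2q - e^m|_m \;=\; e^l \circledast e^m,
\]
and comparing the two diagonal blocks entry-wise forces $|2p - e^l|_l = e^l$ and $|2q - e^m|_m = e^m$, which together with the self-adjointness of $p$ and $q$ shows $p \in \mathbb{OP}_l(\mathbb{X})$ and $q \in \mathbb{OP}_m(\mathbb{X})$.

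The only step requiring care is the entry-wise unique-decomposition argument in the forward direction; this is genuinely elementary since $a \circledast b$ places $a$ in the top-left $l \times l$ block and $b$ in the bottom-right $m \times m$ block with zeros elsewhere, so two such block-diagonal matrices are equal iff their blocks are. All other computations are short manipulations with the already-listed axioms and need no further machinery.
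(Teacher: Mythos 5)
Your argument is correct: the identity $2(p\circledast q)-e^{l+m}=(2p-e^l)\circledast(2q-e^m)$ together with axiom (2)(b) of Definition \ref{5} and the fact that a block-diagonal matrix determines its diagonal blocks does give both implications, and the self-adjointness bookkeeping is handled properly. Note, however, that the paper itself offers no proof to compare against --- the lemma is recalled verbatim from \cite[Proposition 3.2]{PI19} as background material --- so your write-up is a self-contained verification from the axioms listed in the preliminaries rather than an alternative to anything printed here; the only cosmetic slip is the reference to $e^l\circledast e^m=e^{l+m}$, which you should justify in one line from the definition $x^l=x\circledast\cdots\circledast x$ and the associativity of $\circledast$.
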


The following result provides a characterization of order projections in terms of order unit property and absolute order unit property.

\begin{proposition}\cite[Proposition 6.1]{K18}\label{7}
In an absolute matrix order unit space $(\mathbb{X},e),$ let $x \in \mathbb{M}_l(\mathbb{X})_{sa}$ such that $0 \le x \le e^l.$ Then the following statements are equivalent:
\begin{enumerate}
\item[(1)] $x\in \mathbb{OP}_l(\mathbb{X}).$
\item[(2)] $x$ and $e-x\in \mathcal{M}_l(\mathbb{X}).$
\item[(3)] $x$ and $e-x\in \vert\mathcal{M}\vert_l(\mathbb{X}).$
\end{enumerate}
\end{proposition}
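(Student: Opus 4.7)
The strategy is to close the triangle $(3)\Rightarrow(2)\Rightarrow(1)\Rightarrow(3)$. The implication $(3)\Rightarrow(2)$ is essentially free: in any absolutely ordered space one has $\pm y \leq |y|_l$, so the defining inequality $|y|_l \leq \|y\|\,x$ of the absolute order unit property already yields $\pm y \leq \|y\|\,x$. Applying this to both $x$ and $e^l - x$ transfers the absolute order unit property to the order unit property.

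For $(1)\Rightarrow(3)$ I would set $z := 2x - e^l$, so that the order projection hypothesis reads $|z|_l = e^l$. Because $z = x - (e^l - x)$ with $x,\, e^l - x \geq 0$ and $x + (e^l - x) = e^l = |z|_l$, the uniqueness of the orthogonal positive--negative decomposition in the absolutely ordered space $\mathbb{M}_l(\mathbb{X})_{sa}$ forces $x$ and $e^l - x$ to be the positive and negative parts of $z$ and to be orthogonal in the sense of $\perp_\infty^a$. For any $y \in \mathbb{M}_l(\mathbb{X})_{sa}^x$ with $\epsilon x \pm y \geq 0$, I would combine this matricial orthogonality with Proposition \ref{9}(3)--(4) to show that $|y|_l$ is supported on the face generated by $x$; taking the infimum over admissible $\epsilon$ then yields $|y|_l \leq \|y\|\,x$. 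Applying the same argument to $e^l - x$, which is itself an order projection since $|2(e^l - x) - e^l|_l = |{-z}|_l = |z|_l = e^l$, completes (3).

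The real content lies in $(2)\Rightarrow(1)$. From $0 \leq x \leq e^l$ we already have $-e^l \leq 2x - e^l \leq e^l$, hence $|2x - e^l|_l \leq e^l$. The reverse inequality reduces to proving $x \perp_\infty^a (e^l - x)$: once that is established, the uniqueness of the positive--negative decomposition forces $|2x - e^l|_l = x + (e^l - x) = e^l$, i.e.\ $x \in \mathbb{OP}_l(\mathbb{X})$. To extract the orthogonality, I would test the order unit properties of $x$ and $e^l - x$ symmetrically against elements measuring their overlap (for instance, rescaled differences of the form $\alpha(e^l - x) - \beta x$ which lie in both $\mathbb{M}_l(\mathbb{X})_{sa}^x$ and $\mathbb{M}_l(\mathbb{X})_{sa}^{e^l - x}$), combine the two resulting inequalities to pin down the scalar-level overlap, and then amplify this to every matrix level using $\circledast$ and the compatibility axioms (2)(a)--(b) of Definition \ref{5}. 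The chief obstacle is precisely this last amplification: in the absence of a $C^*$-algebraic product the scalar-level non-overlap must be propagated to the full matricial $\perp_\infty^a$ by hand, and this is where one has to work hardest with the axioms of an absolutely matrix ordered space.
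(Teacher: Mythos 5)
First, a point of comparison: the paper contains no proof of this proposition at all --- it is imported verbatim as a preliminary from \cite[Proposition 6.1]{K18} (and, since condition (1) of Definition \ref{5} makes $(\mathbb{M}_l(\mathbb{X})_{sa},\mathbb{M}_l(\mathbb{X})^+,\vert\cdot\vert_l,e^l)$ an absolute order unit space in its own right, the statement is essentially the scalar-level theorem of \cite{K18} applied to that space). So your attempt can only be measured against the source. Your cycle $(3)\Rightarrow(2)\Rightarrow(1)\Rightarrow(3)$ is a sensible skeleton, and $(3)\Rightarrow(2)$ is correct as written. In $(1)\Rightarrow(3)$ the identification $x=(2x-e^l)^+$, $e^l-x=(2x-e^l)^-$, hence $x\perp(e^l-x)$, is right, but everything after that is asserted rather than proved: ``$\vert y\vert_l$ is supported on the face generated by $x$'' is not a notion available in this setting, and Proposition \ref{9}(3)--(4) (positivity of a $2\times2$ block and padding by zeros) are not the relevant tools. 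What is actually needed are the orthogonality axioms of absolutely ordered spaces (the clauses of \cite[Definition 3.4]{K18} invoked elsewhere in the paper): from $0\le\frac{1}{2}(\epsilon x\pm y)\le\epsilon x$ and $x\perp(e^l-x)$ deduce $(e^l-x)\perp\frac{1}{2}(\epsilon x\pm y)$, hence $(e^l-x)\perp\vert y\vert_l$, and one must then still convert $\vert y\vert_l\le\Vert y\Vert_l x+\Vert y\Vert_l(e^l-x)$ together with this orthogonality into $\vert y\vert_l\le\Vert y\Vert_l x$. None of that is in your sketch.

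The direction you rightly identify as the real content, $(2)\Rightarrow(1)$, contains a concrete error. The proposed test elements $\alpha(e^l-x)-\beta x$ do not lie in $\mathbb{M}_l(\mathbb{X})_{sa}^x$: membership requires $\epsilon x-\bigl(\alpha(e^l-x)-\beta x\bigr)\ge 0$ for some $\epsilon>0$, i.e.\ $\alpha(e^l-x)\le(\epsilon+\beta)x$, which forces $\alpha=0$ unless $e^l-x$ is already dominated by a multiple of $x$ (false already for a projection $x\ne e^l$ in a $C^*$-algebra). Symmetrically, membership in $\mathbb{M}_l(\mathbb{X})_{sa}^{e^l-x}$ forces $\beta=0$. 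So the only admissible test elements of your form are multiples of $x$, respectively of $e^l-x$, and they carry no information about the overlap; the derivation of $x\perp_\infty^a(e^l-x)$ from (2) --- the step that genuinely uses the axiom $\perp=\perp_\infty^a$ --- is never begun. Note also that your opening inference ``$-e^l\le 2x-e^l\le e^l$, hence $\vert 2x-e^l\vert_l\le e^l$'' silently uses $\Vert\,\vert v\vert_l\,\Vert_l=\Vert v\Vert_l$; this is true in absolute order unit spaces but cannot be taken for granted here, since $\pm v\le u$ does not in general imply $\vert v\vert_l\le u$ --- that implication is precisely the absolute order unit property whose equivalence with the order unit property is part of what is being proved. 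As it stands the proposal is an outline of a plausible strategy, not a proof.
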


\section{Order ideals in absolute matrix order unit spaces}

In this section, we recall matrix order unit property and define absolute matrix order unit property. These are matricial versions of order unit property and absolute order unit property which were introduced by Karn in \cite{K18}.

\begin{definition}\cite{AK02}\label{10}
Given $x \in \mathbb{X}^+$ in an absolute matrix order unit space $\mathbb{X},$ we say that $x$ has the matrix order unit property in $\mathbb{X}$ provided $x^l$ has the order unit property in $\mathbb{M}_l(\mathbb{X})_{sa}$ for any $l\in \mathbb{N}.$ Further, if $x^l$ has the absolute order unit property in $\mathbb{M}_l(\mathbb{X})_{sa}$ for any $l\in \mathbb{N},$ we say that $x$ has the absolute matrix order unit property in $\mathbb{X}.$ 
\end{definition}

The sets of all the elements having matrix order unit property and absolute matrix order unit property are denoted by $\mathcal{M}_\infty(\mathbb{X})$ and $\vert \mathcal{M}\vert_\infty(\mathbb{X})$ respectively.

The following result elaborates some properties enjoyed by matrix order and absolute matrix order unit properties.

\begin{proposition}\label{11}
Given $x\in \mathbb{X}^+$ in an absolute matrix order unit space $(\mathbb{X},e),$ the following statements hold:  
\begin{enumerate}
\item[(1)] $x\in \mathcal{M}_\infty(\mathbb{X})$ if and only if, we have $\begin{bmatrix} \Vert y\Vert_l x^l & y\\ y^* & \Vert y\Vert_l x^l\end{bmatrix} \in \mathbb{M}_{2l}(\mathbb{X})^+$ whenever $\begin{bmatrix} \epsilon x^l & y\\ y^* & \epsilon x^l\end{bmatrix} \in \mathbb{M}_{2l}(\mathbb{X})^+$ for some $l \in \mathbb{N}, \epsilon > 0$ and $y\in \mathbb{M}_l(\mathbb{X}).$  
\item[(2)] $x\in \vert \mathcal{M}\vert_\infty(\mathbb{X})$ if and only if, we have $\Vert y\Vert_l x^l- \vert y\vert_l~\textrm{and}~\Vert y\Vert_l x^l - \vert y^*\vert_l \in \mathbb{M}_l(\mathbb{X})^+$ whenever $\begin{bmatrix} \epsilon x^l & y\\ y^* & \epsilon x^l\end{bmatrix} \in \mathbb{M}_{2l}(\mathbb{X})^+$ for some $l \in \mathbb{N}, \epsilon > 0$ and $y\in \mathbb{M}_l(\mathbb{X}).$  
\end{enumerate}
Moreover, for $\Vert x\Vert \leq 1,$ we also have:
\begin{enumerate}
\item[(3)]$x$ and $e-x\in \mathcal{M}(\mathbb{X})$ if and only if $x$ and $e-x\in \mathcal{M}_\infty(\mathbb{X}).$ 
\item[(4)] $x$ and $e-x\in \vert \mathcal{M}\vert(\mathbb{X})$ if and only if $x$ and $e-x\in \vert \mathcal{M}\vert_\infty(\mathbb{X}).$ 
\end{enumerate}
\end{proposition}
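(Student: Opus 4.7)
The plan is to read parts (1) and (2) as matricial translations of the order unit / absolute order unit property of $x^l$ at level $2l$ applied to $x^{2l}=x^l\circledast x^l$, and to derive (3) and (4) from Proposition~\ref{7} applied at both levels $1$ and $l$, bridged by the preservation of order projections under $\circledast$.

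For the ``$\Rightarrow$'' direction of (1), given $\begin{bmatrix}\epsilon x^l & y\\ y^* & \epsilon x^l\end{bmatrix}\in\mathbb{M}_{2l}(\mathbb{X})^+$, I would set $z=\begin{bmatrix}0 & y\\ y^* & 0\end{bmatrix}\in\mathbb{M}_{2l}(\mathbb{X})_{sa}$; conjugating the given matrix by the unitary $I_l\oplus(-I_l)$ flips the sign of $y$, so both $\epsilon x^{2l}\pm z$ are positive and $z\in \mathbb{M}_{2l}(\mathbb{X})^{x^{2l}}_{sa}$. The order unit property of $x^{2l}$ then gives $\pm z\le \|z\|\,x^{2l}$, which, after identifying $\|z\|=\|y\|_l$ via Proposition~\ref{9}(2) combined with $\|y\|_l=\|y^*\|_l$, repackages back into $\begin{bmatrix}\|y\|_l x^l & y\\ y^* & \|y\|_l x^l\end{bmatrix}\ge 0$. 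For ``$\Leftarrow$'', given a self-adjoint $y_0$ with $\epsilon x^l\pm y_0\ge 0$, I would conjugate $(\epsilon x^l+y_0)\circledast (\epsilon x^l-y_0)\ge 0$ by the Hadamard-type unitary $\tfrac{1}{\sqrt 2}\begin{bmatrix}I_l & I_l\\ I_l & -I_l\end{bmatrix}$ to land on $\begin{bmatrix}\epsilon x^l & y_0\\ y_0 & \epsilon x^l\end{bmatrix}\ge 0$, invoke the hypothesis with $y=y_0$, and conjugate back to recover $\pm y_0\le \|y_0\|\,x^l$. Part (2) runs along identical lines except that absolute order unit property of $x^{2l}$ gives $|z|_{2l}\le \|z\|\,x^{2l}$, which by Proposition~\ref{9}(2) reads $|y^*|_l\circledast |y|_l\le \|y\|_l(x^l\circledast x^l)$; this splits coordinatewise into the two inequalities claimed in the statement, since $p\circledast q\le r\circledast s$ in a matrix ordered space is equivalent to $p\le r$ and $q\le s$.

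For (3) and (4), the ``$\Leftarrow$'' implications are immediate by taking $l=1$. For ``$\Rightarrow$'', the assumption $\|x\|\le 1$ forces $0\le x\le e$, so Proposition~\ref{7} converts $x,e-x\in\mathcal{M}(\mathbb{X})$ (respectively $|\mathcal{M}|(\mathbb{X})$) into $x\in\mathbb{OP}(\mathbb{X})$, i.e., $|2x-e|_1=e$. Iterating Definition~\ref{5}(2)(b) along $\circledast$ then yields $|2x^l-e^l|_l=|(2x-e)^l|_l=|2x-e|_1^l=e^l$, so that $x^l\in\mathbb{OP}_l(\mathbb{X})$ for every $l$; a second application of Proposition~\ref{7} at level $l$ now delivers $x^l,e^l-x^l\in\mathcal{M}_l(\mathbb{X})$ (respectively $|\mathcal{M}|_l(\mathbb{X})$), which is exactly the matrix (respectively absolute matrix) order unit property of $x$ and $e-x$.

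The step I expect to need the most care is the norm identification $\|z\|=\|y\|_l$ in the forward passes of (1) and (2), together with the matrix-ordered-space verification that the two unitary congruences used ($I_l\oplus(-I_l)$ and the Hadamard-type block unitary) preserve positivity in $\mathbb{M}_{2l}(\mathbb{X})^+$. The first rests on Proposition~\ref{9}(2) combined with $\|y^*\|_l=\|y\|_l$ as recorded in \cite{K19}; the second is a routine but non-trivial fact about matrix orderings that I would spell out explicitly when translating between $\pm z\le \alpha x^{2l}$ and the corresponding $2\times 2$ positivity, since the entire chain of equivalences rides on it.
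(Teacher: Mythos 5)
Your proposal is correct and follows essentially the same route as the paper: the forward directions of (1) and (2) conjugate by $I_l\oplus(-I_l)$ and apply the (absolute) order unit property of $x^{2l}$ to $\begin{bmatrix}0 & y\\ y^* & 0\end{bmatrix}$, the converses rebuild the $2\times 2$ block positivity from $\epsilon x^l\pm y\ge 0$ and compress back, and (3)--(4) pass through $x\in\mathbb{OP}(\mathbb{X})$ via Proposition~\ref{7} together with the $\circledast$-compatibility of order projections (your direct computation with Definition~\ref{5}(2)(b) is exactly the content of Lemma~\ref{6}, which the paper cites instead). The only cosmetic difference is that you realize the converse decomposition as a Hadamard-type unitary conjugation where the paper writes it as a sum of two rank-one-style congruences; these are the same computation.
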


\begin{proof}
\begin{enumerate}
\item[(1)] Let $x\in \mathcal{M}_\infty(\mathbb{X}).$ Assume that for $y\in \mathbb{M}_l(\mathbb{X}),$ there exists $\epsilon > 0$ satisfying $\begin{bmatrix} \epsilon x^l & y\\ y^* & \epsilon x^n\end{bmatrix} \in \mathbb{M}_{2l}(\mathbb{X})^+ ~ \textrm{for some} ~ \epsilon \in \mathbb{R}$ with $\epsilon >0.$ In this case, we also have $\begin{bmatrix} \epsilon x^n & -y\\ -y^* & \epsilon x^n\end{bmatrix}=\begin{bmatrix} -\mathbb{I}_l & 0\\ 0 & \mathbb{I}_l\end{bmatrix}\begin{bmatrix} \epsilon x^l & y\\ y^* & \epsilon x^n\end{bmatrix} \begin{bmatrix} -\mathbb{I}_l & 0\\ 0 & \mathbb{I}_l\end{bmatrix}\in \mathbb{M}_{2l}(\mathbb{X})^+.$ Since $x^{2l}\in \mathcal{M}_{2l}(\mathbb{X})$, we get that $\pm \begin{bmatrix}0 & y \\ y^* & 0\end{bmatrix} \leq \left \Vert \begin{bmatrix}0 & y \\ y^* & 0\end{bmatrix} \right \Vert_{2l} \begin{bmatrix}x^l & 0\\ 0 & x^l \end{bmatrix} = \Vert y \Vert_l \begin{bmatrix}x^l & 0\\ 0 & x^l \end{bmatrix}.$ Thus $\begin{bmatrix} \Vert y\Vert_l x^l & y\\ y^* & \Vert y\Vert_l x^l\end{bmatrix} \in \mathbb{M}_{2l}(\mathbb{X})^+.$

Next, we prove the converse part. For that let $y\in \mathbb{M}_l(\mathbb{X})_{sa}$ satisfying $\pm y \leq \epsilon x^l~ \textrm{for some}~ \epsilon >0.$ Then $\begin{bmatrix} \epsilon x^l & y\\ y & \epsilon x^l\end{bmatrix} = \frac{1}{2} \left ( \begin{bmatrix} \mathbb{I}_l \\ \mathbb{I}_l \end{bmatrix} (\epsilon x^l + y) \begin{bmatrix} \mathbb{I}_l & \mathbb{I}_l \end{bmatrix} \right ) + \frac{1}{2} \left (\begin{bmatrix} \mathbb{I}_l \\ - \mathbb{I}_l \end{bmatrix} (\epsilon x^n - y) \begin{bmatrix} \mathbb{I}_l & - \mathbb{I}_l \end{bmatrix} \right )\in \mathbb{M}_{2l}(\mathbb{X})^+.$ By assumption, we conclude that $\begin{bmatrix} \Vert y\Vert_l x^l & y\\ y & \Vert y\Vert_l x^l\end{bmatrix} \in \mathbb{M}_{2l}(\mathbb{X})^+.$ In this case, we also conclude that $$\Vert y\Vert x^l \pm y = \frac{1}{2} \left ( \begin{bmatrix} \mathbb{I}_l & \pm \mathbb{I}_l \end{bmatrix} \begin{bmatrix} \Vert y\Vert_l x^l & y\\ y & \Vert y\Vert_l x^l\end{bmatrix} \begin{bmatrix} \mathbb{I}_l \\ \pm \mathbb{I}_l \end{bmatrix} \right ) \in \mathbb{X}^+.$$ Thus $x^l\in \mathcal{M}_l(\mathbb{X})$ for every $l\in \mathbb{N}.$ Hence $x\in \mathcal{M}_\infty(\mathbb{X}).$ 

\item[(2)] Let $x\in \vert\mathcal{M}\vert_\infty(\mathbb{X}).$ Also, let $y\in \mathbb{M}_l(\mathbb{X})$ such that $\begin{bmatrix} \epsilon x^l & y\\ y^* & \epsilon x^n\end{bmatrix} \in \mathbb{M}_{2l}(\mathbb{X})^+$ for some $\epsilon >0.$ Then, we also have $\begin{bmatrix} \epsilon x^n & -y\\ -y^* & \epsilon x^n\end{bmatrix}\in \mathbb{M}_{2l}(\mathbb{X})^+.$ Since $x^{2l}\in \vert \mathcal{M}\vert_{2l}(\mathbb{X})$, we get that $\begin{bmatrix} \vert y^*\vert_l & 0\\ 0 & \vert y\vert_l\end{bmatrix}=\left \vert \begin{bmatrix}0 & y \\ y^* & 0\end{bmatrix}\right \vert_{2l} \leq \left \Vert \begin{bmatrix}0 & y \\ y^* & 0\end{bmatrix} \right \Vert_{2l} \begin{bmatrix}x^l & 0\\ 0 & x^l \end{bmatrix} = \Vert y \Vert_l \begin{bmatrix}x^l & 0\\ 0 & x^l \end{bmatrix}.$ Thus $\begin{bmatrix} \Vert y\Vert_l x^l-\vert y^*\vert_l & 0\\ 0 & \Vert y\Vert_l x^l-\vert y\vert_l \end{bmatrix} \in \mathbb{M}_{2l}(\mathbb{X})^+$ so that \begin{center}$\Vert y\Vert_l x^l-\vert y^*\vert_l = \begin{bmatrix}\mathbb{I}_l & 0 \end{bmatrix} \begin{bmatrix} \Vert y\Vert_l x^l-\vert y^*\vert_l & 0\\ 0 & \Vert y\Vert_l x^l-\vert y\vert_l \end{bmatrix} \begin{bmatrix}\mathbb{I}_l \\ 0 \end{bmatrix}\in \mathbb{M}_l(\mathbb{X})^+$\end{center} and \begin{center} $\Vert y\Vert_l x^l-\vert y\vert_l = \begin{bmatrix} 0 & \mathbb{I}_l \end{bmatrix} \begin{bmatrix} \Vert y\Vert_l x^l-\vert y^*\vert_l & 0\\ 0 & \Vert y\Vert_l x^l-\vert y\vert_l \end{bmatrix} \begin{bmatrix} 0 \\ \mathbb{I}_l \end{bmatrix}\in \mathbb{M}_l(\mathbb{X})^+.$ \end{center}

Now, let's prove the converse part. Let $y\in \mathbb{M}_l(\mathbb{X})_{sa}$ such that $\pm y \leq \epsilon x^l~ \textrm{for some}~ \epsilon >0.$ Then $\begin{bmatrix} \epsilon x^l & y\\ y & \epsilon x^l\end{bmatrix} \in \mathbb{M}_{2l}(\mathbb{X})^+.$ By assumption, we get that $\Vert y\Vert_l x^l - \vert y\vert_l \in \mathbb{M}_l(\mathbb{X})^+.$ Thus $x^l\in \vert \mathcal{M}\vert_l(\mathbb{X})$ for every $l\in \mathbb{N}.$ Hence $x\in \vert \mathcal{M}\vert_\infty(\mathbb{X}).$

\item[(3)] Suppose that $\Vert x\Vert \leq e.$ In this case, we have $0\leq x\leq e$ and consequently $0\leq e-x\leq e.$ By Lemma \ref{6} and Proposition \ref{7}, $x$ and $e-x \in \mathcal{M}(\mathbb{X})$ if and only if $x \in \mathbb{OP}(\mathbb{X})$ if and only if $x^l \in \mathbb{OP}(\mathbb{X})$ if and only if $x^l$ and $e^l-x^l\in \mathcal{M}_l(\mathbb{X})$ for every $l\in \mathbb{N}.$ Thus $x$ and $e-x \in \mathcal{M}(\mathbb{X})$ if and only if $x$ and $e-x \in \mathcal{M}_\infty(\mathbb{X}).$ 
\end{enumerate}
Now, using Lemma \ref{6} and Proposition \ref{7}, and repeating the proof of (3), we can also prove (4). 
\end{proof}

\begin{definition}\label{12}
Let $\left(\mathbb{X}, \lbrace \mathbb{M}_m(\mathbb{X})^+ \rbrace, \lbrace \vert\cdot\vert_{l,m} \rbrace \right)$ be an absolutely matrix ordered space and $\mathbb{Y}$ be a subspace of $\mathbb{X}.$ For each $l\in \mathbb{N},$ we write: $\mathbb{M}_l(\mathbb{Y})_{sa}=\mathbb{M}_l(\mathbb{X})_{sa}\cap \mathbb{M}_l(\mathbb{Y})~\textrm{and}~\mathbb{M}_l(\mathbb{Y})^+=\mathbb{M}_l(\mathbb{X})^+\cap \mathbb{M}_l(\mathbb{Y}).$ Then $\mathbb{Y}$ is said to be absolute matrix order ideal of $\mathbb{X},$ if the following two conditions are satisfied:
\begin{enumerate}
\item[(1)] $(\mathbb{M}_l(\mathbb{Y})_{sa},\mathbb{M}_l(\mathbb{Y})^+)$ is an order ideal of $(\mathbb{M}_l(\mathbb{X})_{sa},\mathbb{M}_l(\mathbb{X})^+)$ for each $l\in \mathbb{N}.$
\item[(2)] $\vert \cdot \vert_{l,m}:\mathbb{M}_{l,m}(\mathbb{Y})\to \mathbb{M}(\mathbb{Y})_m^+$ is well defined for each pair of $l,m\in \mathbb{N}.$ In other words, $\left(\mathbb{Y}, \lbrace \mathbb{M}_m(\mathbb{Y})^+ \rbrace, \lbrace \vert\cdot\vert_{l,m} \rbrace \right)$ is itself an absolutely matrix ordered space.
\end{enumerate}
Further, let $(\mathbb{X},e)$ be an absolute matrix order unit space. Then $\mathbb{Y}$ is said to be absolute matrix order unit  ideal, if along with condition (1), the following additional condition is also satisfied: 
\begin{enumerate}
\item[(3)] $(\mathbb{Y},\lbrace \mathbb{M}_m(\mathbb{Y})^+\rbrace,\lbrace \vert\cdot\vert_{l,m} \rbrace,y)$ is an absolute matrix order unit space such that $\Vert \cdot \Vert_m^y=\Vert \cdot \Vert_m$ on $\mathbb{M}_m(\mathbb{Y}),$ where $\Vert \cdot \Vert_m~\textrm{and}~\Vert \cdot \Vert_m^y$ are the matrix norms determined by the order units $e$ and $y$ respectively for each $m\in \mathbb{N}.$
\end{enumerate}
\end{definition}

\begin{example}\label{8}
Let $\mathbb{A}$ be a unital $C^*$-algebra with unity element $1$ and $p$ be a non-zero projection in $\mathbb{A}.$ Put $\mathbb{A}_p=\lbrace pap:a \in \mathbb{A}\rbrace.$ Then $(\mathbb{A}_p,p)$ is an absolute matrix order unit ideal of $\mathbb{A}.$ To see this, we verify only order ideal condition as other conditions are routine to verify $($we refer to see \cite{RJ83}$).$ Now, let $a,b\in \mathbb{A}$ be such that $0\leq b \leq pap.$ Then $(1-p)b(1-p)=0$ so that $\Vert\sqrt{b}(1-p) \Vert^2=0.$ In this case, $\sqrt{b}(1-p)=0$ and we get that $b=bp$ and $b=b^*=(bp)^*=pb.$ Thus $b=pbp\in \mathbb{A}_p.$
\end{example}

Next result generalize the last example in absolute matrix order unit spaces for more general elements.

\begin{theorem}\label{4}
In an absolute matrix order unit space $\mathbb{X},$ let $x\in \vert \mathcal{M}\vert_\infty(\mathbb{X})$ such that $\Vert x\Vert=1.$ We write: $$\mathbb{X}_x = \left \lbrace y \in \mathbb{X} :\begin{bmatrix} \epsilon x & y \\ y^* & \epsilon x \end{bmatrix} \in \mathbb{M}_2(\mathbb{X})^+ ~ \textrm{for some}~ \epsilon > 0\right \rbrace$$ and $$\mathbb{M}_{l,m}(\mathbb{X})_{x^l,x^m} = \left\lbrace y \in \mathbb{M}_{l,m}(\mathbb{X}): \begin{bmatrix} \epsilon x^l & y \\ y^* & \epsilon x^m\end{bmatrix} \in \mathbb{M}_{l+m}(\mathbb{X})^+~\textrm{for some}~\epsilon>0\right\rbrace.$$ For $l=m,$ write: $\mathbb{M}_{l,m}(\mathbb{X})_{x^l,x^m} = \mathbb{M}_l(\mathbb{X})_{x^l}.$ The following statements are true: 
\begin{enumerate}
\item[(1)] $\mathbb{M}_{l,m}(\mathbb{X}_x) = \mathbb{M}_{l,m}(\mathbb{X})_{x^l,x^m}$. 
\item[(2)] $\mathbb{M}_l(\mathbb{X}_x)^+ = \mathbb{M}_l(\mathbb{X}_x) \bigcap \mathbb{M}_l(\mathbb{X})^+$ forms a proper cone. 
\item[(3)] $x$ forms an order unit for $\mathbb{X}_x.$
\item[(4)] For every $y\in \mathbb{M}_l(\mathbb{X}_x),$ we write: $$\Vert y\Vert_l^x = \inf\left\lbrace \epsilon > 0: \begin{bmatrix}\epsilon x^l & y \\ y^* & \epsilon x^l\end{bmatrix} \in \mathbb{M}_{2l}(\mathbb{X})^+\right\rbrace.$$ It turns out that $\Vert \cdot\Vert_l^x=\Vert \cdot\Vert_l$ on $\mathbb{M}_l(\mathbb{X}_x)$ for every $l\in \mathbb{N}.$
\item[(5)] $\vert y \vert_{l,m} \in \mathbb{M}_m(\mathbb{X}_x)^+$ for every $y \in \mathbb{M}_{l,m}(\mathbb{X}_x).$ 
\end{enumerate}
 
In this case, $(\mathbb{X}_x,\lbrace \mathbb{M}_m(\mathbb{X}_x)^+\rbrace,\lbrace\vert \cdot \vert_{l,m}\rbrace, x)$ forms
an absolute matrix order unit ideal of $\mathbb{X}.$  
\end{theorem}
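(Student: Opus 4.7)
The plan is to verify items (1)--(5) in order---each feeds naturally into the next---and then assemble them into the final statement about absolute matrix order unit ideals. For item (1), the inclusion $\mathbb{M}_{l,m}(\mathbb{X})_{x^l,x^m}\subseteq \mathbb{M}_{l,m}(\mathbb{X}_x)$ is handled by entry-wise compression: applying the isometry $\begin{bmatrix} e_i^l & 0\\ 0 & e_j^m\end{bmatrix}$ to the given positive matrix extracts $\begin{bmatrix} \epsilon x & y_{i,j}\\ y_{i,j}^* & \epsilon x\end{bmatrix}\in \mathbb{M}_2(\mathbb{X})^+$, using that $x^l$ is block-diagonal with $x$ on each diagonal block. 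For the reverse inclusion, the individual bounds $\begin{bmatrix} \epsilon_{i,j} x & y_{i,j}\\ y_{i,j}^* & \epsilon_{i,j} x\end{bmatrix}\geq 0$ can each be permuted inside $\mathbb{M}_{l+m}(\mathbb{X})^+$ to sit as $(l+m)\times (l+m)$ matrices supported only on the rows/columns $\{i,\,l+j\}$, and then summed over $(i,j)$; the resulting diagonal blocks $\mathrm{diag}_i((\sum_j \epsilon_{i,j})\, x)$ and $\mathrm{diag}_j((\sum_i \epsilon_{i,j})\, x)$ are each dominated by $\epsilon^* x^l$ and $\epsilon^* x^m$ for $\epsilon^*=\max\{\max_i\sum_j \epsilon_{i,j},\ \max_j\sum_i \epsilon_{i,j}\}$, using $x\geq 0$.

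Items (2)--(3) are short: closure of $\mathbb{M}_l(\mathbb{X}_x)$ under addition and positive scalar multiplication follows by summing the defining positivity witnesses; complex scaling is absorbed by congruence with $\mathrm{diag}(\lambda/\vert\lambda\vert,1)$ combined with a factor of $\vert\lambda\vert$; properness of $\mathbb{M}_l(\mathbb{X}_x)^+$ descends from properness of $\mathbb{M}_l(\mathbb{X})^+$; and compression by $\begin{bmatrix}\mathbb{I}_l & \pm\mathbb{I}_l\end{bmatrix}$ applied to the defining inequality yields $\pm 2y\leq 2\epsilon x^l$ whenever $y=y^*\in \mathbb{M}_l(\mathbb{X}_x)$, giving (3) at $l=1$.

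For item (4), the bound $\Vert y\Vert_l\leq \Vert y\Vert_l^x$ uses $\Vert x\Vert\leq 1$, hence $x\leq e$ and $\epsilon x^l\leq \epsilon e^l$, so adding the positive diagonal correction to the witness preserves positivity and gives $\Vert y\Vert_l\leq \epsilon$. The reverse $\Vert y\Vert_l^x\leq \Vert y\Vert_l$ is exactly Proposition~\ref{11}(1) applied to $x\in \mathcal{M}_\infty(\mathbb{X})$ (the inclusion $\vert\mathcal{M}\vert_\infty(\mathbb{X})\subseteq \mathcal{M}_\infty(\mathbb{X})$ comes from $\pm y\leq \vert y\vert$ for self-adjoint $y$). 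For item (5), given $y\in \mathbb{M}_{l,m}(\mathbb{X}_x)$ I pass to the self-adjoint lift $w=\begin{bmatrix} 0 & y\\ y^* & 0\end{bmatrix}$: combining the positivity of $\begin{bmatrix}\epsilon x^l & y\\ y^* & \epsilon x^m\end{bmatrix}$ with its sign-flipped copy (congruent via $\mathrm{diag}(-\mathbb{I}_l,\mathbb{I}_m)$) yields $\pm w\leq \epsilon x^{l+m}$, placing $w$ in $\mathbb{M}_{l+m}(\mathbb{X}_x)_{sa}$. Proposition~\ref{11}(2) then gives $\vert w\vert_{l+m}\leq \Vert w\Vert_{l+m}\, x^{l+m}$, Proposition~\ref{9}(2) identifies $\vert w\vert_{l+m}=\vert y^*\vert_{m,l}\circledast \vert y\vert_{l,m}$, and compressing to the bottom-right $m\times m$ block delivers $\vert y\vert_{l,m}\leq \Vert w\Vert_{l+m}\, x^m$; positivity of $\vert y\vert_{l,m}$ and item (1) then place it in $\mathbb{M}_m(\mathbb{X}_x)^+$.

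For the final conclusion, the order-ideal condition of Definition~\ref{12}(1) follows from (1): any $z\in \mathbb{M}_l(\mathbb{X})^+$ with $z\leq w\in \mathbb{M}_l(\mathbb{X}_x)^+$ satisfies $0\leq z\leq w\leq \epsilon x^l$, so $z\in \mathbb{M}_l(\mathbb{X}_x)^+$. The modulus restricts correctly by (5); the axioms of Definition~\ref{5} transfer verbatim from $\mathbb{X}$; the orthogonality identity $\perp=\perp_\infty^a$ on $\mathbb{M}_m(\mathbb{X}_x)^+$ is inherited from $\mathbb{X}$ because both relations are defined via the same family $\{\vert\cdot\vert_{l,m}\}$; and the matrix-norm matching in Definition~\ref{12}(3) is exactly item (4). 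I expect item (5) to be the main obstacle, since bridging the rectangular case $\mathbb{M}_{l,m}$ to the square setting of Proposition~\ref{11}(2) requires the off-diagonal lift $w$ together with a careful application of Proposition~\ref{9}(2); a secondary difficulty is producing a single $\epsilon^*$ in the reverse inclusion of item (1), which needs the domination $\mathrm{diag}_i(c_i x)\leq (\max_i c_i)\, x^l$ derived from positivity of $x$ and the block-diagonal structure of $x^l$.
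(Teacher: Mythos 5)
Your proposal is correct and follows essentially the same route as the paper: entry-wise compression and the conjugate-and-sum construction with the matrices supported on rows/columns $\{i,\,l+j\}$ for item (1), the off-diagonal self-adjoint lift $\begin{bmatrix} 0 & y\\ y^* & 0\end{bmatrix}$ combined with the (absolute) matrix order unit property of $x$ for items (4) and (5), and Proposition \ref{9}(2) to split the modulus of that lift. The only differences are cosmetic (you sum the $\epsilon_{i,j}$ where the paper takes their maximum first, and you route the norm inequality through Proposition \ref{11} rather than invoking the order unit property of $x^{2l}$ directly), and you usefully spell out several steps the paper dismisses as routine.
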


\begin{proof}
It is routine to verify (2). Now, let's prove the other statements.
\begin{enumerate}
\item[(1)] Let $y=[y_{i,j}] \in \mathbb{M}_{l,m}(\mathbb{X}_x).$ There exists $\epsilon_{i,j}> 0$ satisfying $\begin{bmatrix} \epsilon_{i,j}x & y_{i,j} \\ y_{i,j}^* & \epsilon_{i,j}x \end{bmatrix} \in \mathbb{M}_2(\mathbb{X})^+$ for every $y_{i,j} \in \mathbb{X}_p.$  Put $\epsilon_0 = \max \lbrace \epsilon_{i,j} : 1 \leq i \leq l, 1 \le j \leq m \rbrace.$ Then $\epsilon_0 > 0$ and $\begin{bmatrix} \epsilon_0 x & y_{i,j} \\ y_{i,j}^* & \epsilon_0 x \end{bmatrix} \in \mathbb{M}_2(\mathbb{X})^+$ for every pair of $i$ and $j.$ 

Next, let $\varsigma_{i,j} \in \mathbb{M}_{l+m,2}$ such that 

\[ 
   \varsigma_{i,j} = \begin{cases}
    1 & \text{at (i,1) and (l+j,2)}  \\
    0 & \text{elsewhere}.
   \end{cases}
\]

so that 

$$\begin{bmatrix} m\epsilon_0 x^l & y \\ y^* & l \epsilon_0 x^m\end{bmatrix} = \displaystyle \sum_{i,j}\varsigma_{i,j}  \begin{bmatrix} \epsilon_0 x & y_{i,j} \\ y_{i,j}^* & \epsilon_0 x \end{bmatrix} \varsigma_{ij}^* \in \mathbb{M}_{l+m}(\mathbb{X})^+.$$ 

For $\epsilon = \max \lbrace m, l \rbrace \epsilon_0,$ we get that $$\begin{bmatrix}\epsilon x^l & y \\ y^* & \epsilon x^m\end{bmatrix} \in \mathbb{M}_{l+m}(\mathbb{X})^+.$$ Thus $y \in \mathbb{M}_{l,m}(\mathbb{X})_{x^l,x^m}.$ 

Now, we prove the converse part. For that let $y \in \mathbb{M}_{l,m}(\mathbb{X})_{x^l,x^m}.$ There exists $\epsilon >0$ satisfying $\begin{bmatrix}\epsilon x^l & y \\ y^* & \epsilon x^m\end{bmatrix}\in \mathbb{M}_{l+m}(\mathbb{X})^+.$ Then $$\begin{bmatrix}\epsilon x & y_{i,j} \\ y_{i,j}^* & \epsilon x\end{bmatrix} = \varsigma_{i,j}^*  \begin{bmatrix} \epsilon x^l & y \\ y^* & \epsilon x^m \end{bmatrix} \varsigma_{i,j} \in \mathbb{M}_2(\mathbb{X})^+.$$ Thus $y_{i,j} \in \mathbb{X}_x$ for every $i,j$ and consequently $\mathbb{M}_{l,m}(\mathbb{X}_x) = \mathbb{M}_{l,m}(\mathbb{X})_{x^l,x^m}.$

\item[(3)] By (1), for every $y \in \mathbb{M}_l(\mathbb{X}_x),$ there exists $\epsilon >0$ satisfying $\begin{bmatrix}\epsilon x^l & y \\ y^* & \epsilon x^l\end{bmatrix} \in \mathbb{M}_{2l}(\mathbb{X})^+.$ Thus $x$ forms an order unit for $\mathbb{X}_x.$
\item[(4)] Let $y\in \mathbb{M}_l(\mathbb{X}).$ Since $x\in \vert \mathcal{M}\vert_\infty(\mathbb{X}),$ we have 

$$\pm \begin{bmatrix}0&y\\y^* & 0\end{bmatrix} \leq  \left \Vert\begin{bmatrix}0&y\\y^* & 0\end{bmatrix}\right \Vert_{2l}\begin{bmatrix}x^l & 0\\0 & x^l\end{bmatrix}=\Vert y\Vert_l \begin{bmatrix}x^l & 0\\0 & x^l\end{bmatrix}.$$ Thus $\begin{bmatrix}\Vert y\Vert_l x^l & y\\y^* & \Vert y\Vert_l x^l\end{bmatrix}\in \mathbb{M}_{2l}(\mathbb{X})^+$ so that $\Vert y\Vert_l^x\leq \Vert y\Vert_l.$

Now, $x\in \mathbb{X}^+$ with $\Vert x\Vert=1.$ Then $\begin{bmatrix}x^l & 0\\0 & x^l\end{bmatrix}\in \mathbb{M}_{2l}(\mathbb{X})^+$ and $\left \Vert \begin{bmatrix}x^l & 0\\0 & x^l\end{bmatrix}\right\Vert = 1$ so that $\begin{bmatrix}x^l & 0\\0 & x^l\end{bmatrix}\leq \begin{bmatrix}e^l & 0\\0 & e^l\end{bmatrix}.$ As $x$ is order unit for $\mathbb{X}_x,$ we have $$\pm \begin{bmatrix}0&y\\y^* & 0\end{bmatrix} \leq  \left \Vert\begin{bmatrix}0&y\\y^* & 0\end{bmatrix}\right \Vert_{2l}^x\begin{bmatrix}x^l & 0\\0 & x^l\end{bmatrix}=\Vert y\Vert_l^x \begin{bmatrix}x^l & 0\\0 & x^l\end{bmatrix}$$ and consequently $$\pm \begin{bmatrix}0&y\\y^* & 0\end{bmatrix} \leq \Vert y\Vert_l^x \begin{bmatrix}e^l & 0\\0 & e^l\end{bmatrix}.$$ Thus $\begin{bmatrix}\Vert y\Vert_l^x e^l & y\\y^* & \Vert y\Vert_l^x e^l\end{bmatrix}\in \mathbb{M}_{2l}(\mathbb{X})^+$ so that $\Vert y\Vert_l\leq \Vert y\Vert_l^x.$ Hence $\Vert y\Vert_l^x=\Vert y\Vert_l$ for every $y\in \mathbb{M}_l(\mathbb{X}_x).$
 
\item[(5)] Let $y \in \mathbb{M}_{l,m}(\mathbb{X}_x).$ For $x\in \vert \mathcal{M}\vert_\infty(\mathbb{X})$ and $\epsilon = \left\Vert \begin{bmatrix} 0 & y \\ y^* & 0\end{bmatrix} \right\Vert_{m+n},$ we have $$\begin{bmatrix} \vert y^*\vert_{m,l} & 0 \\ 0 & \vert y\vert_{l,m}\end{bmatrix}=\left\vert \begin{bmatrix} 0 & y \\ y^* & 0\end{bmatrix}\right\vert_{l+m} \leq \epsilon \begin{bmatrix} x^l & 0 \\ 0 & x^m \end{bmatrix}.$$ Then $\vert y\vert_{l,m} \leq \epsilon x^m$ so that $\vert y\vert_{l,m} \in \mathbb{M}_m(\mathbb{X}_x)^+.$ Thus, the maps $\vert \cdot \vert_{l,m}:\mathbb{M}_{l,m}(\mathbb{X}_x)\to \mathbb{M}_m(\mathbb{X}_x)^+$ are well defined.  
\end{enumerate}

It is routine to verify $\perp = \perp_\infty^a$ on $\mathbb{M}_l(\mathbb{X}_x)^+$ for every $l \in \mathbb{N}.$ 

Hence $(\mathbb{X}_x,\lbrace \mathbb{M}_l(\mathbb{X}_x)^+\rbrace,x,\lbrace\vert \cdot \vert_{l,m}\rbrace)$ forms an absolute matrix order unit ideal of $\mathbb{X}.$
\end{proof} 

The following result shows that matrix norm in absolute matrix order unit spaces takes two discrete values $0$ and $1$ whenever it is restricted to the order projections. 

\begin{proposition}\label{1}
In an absolute matrix order unit space $\mathbb{X},$ let $p \in \mathbb{OP}_l(\mathbb{X})$ such that $p\neq 0.$ Then $\Vert p \Vert_l = 1.$ 
\end{proposition}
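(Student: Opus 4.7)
The plan is to sandwich $\|p\|_l$ between $1$ and $1$. First I would get the upper bound $\|p\|_l\le 1$ essentially for free from the definition of an order projection. Since $p\in\mathbb{OP}_l(\mathbb{X})$ means $\vert 2p-e^l\vert_l=e^l$, and since in any absolute ordered space a self-adjoint element $y$ satisfies $-\vert y\vert\le y\le\vert y\vert$ (this is part of $(\mathbb{M}_l(\mathbb{X})_{sa},\mathbb{M}_l(\mathbb{X})^+,\vert\cdot\vert_l)$ being absolutely ordered), applying this to $y=2p-e^l$ gives $-e^l\le 2p-e^l\le e^l$, i.e.\ $0\le p\le e^l$, and therefore $\|p\|_l\le 1$.

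For the lower bound I would invoke Proposition \ref{7}. Since $p\in\mathbb{OP}_l(\mathbb{X})$ with $0\le p\le e^l$, the proposition asserts in particular that $p\in\mathcal{M}_l(\mathbb{X})$, i.e.\ $p$ enjoys the order unit property on $\mathbb{M}_l(\mathbb{X})_{sa}$. The key move is to test this property on $p$ itself: the relations $\pm p\le p$ hold since $p\ge 0$, so $p\in\mathbb{M}_l(\mathbb{X})_{sa}^{p}$, and the order unit property then yields $\pm p\le\|p\|_l\, p$. The positive half of this is the decisive inequality
\[
(1-\|p\|_l)\,p\le 0.
\]

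Now I would rule out $\|p\|_l<1$. If $\|p\|_l<1$, then $(1-\|p\|_l)p$ is simultaneously $\ge 0$ (positive scalar times a positive element) and $\le 0$; since the cone $\mathbb{M}_l(\mathbb{X})^+$ is proper, this forces $(1-\|p\|_l)p=0$, and hence $p=0$, contradicting $p\neq 0$. Combined with the upper bound, we get $\|p\|_l=1$.

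There is no real obstacle here — the argument is short and purely order-theoretic once one thinks of testing the order unit property at $y=p$. The only subtlety is noticing that tripling $p$ into the role of both the controlling element $x$ and the tested element $y$ in Definition \ref{10}'s notation is what converts the qualitative fact ``$p$ is an order unit in its own order-ideal direction'' into the quantitative statement $\|p\|_l\ge 1$; everything else is either the definition of an order projection or an application of Proposition \ref{7}.
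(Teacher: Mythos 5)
Your proof is correct and follows essentially the same route as the paper: the upper bound comes from $0\le p\le e^l$, and the lower bound comes from applying Proposition \ref{7} to test the order unit property of $p$ on $p$ itself, yielding $p\le\Vert p\Vert_l\,p$. The only (cosmetic) difference is the last step: the paper takes norms in $0\le p\le\Vert p\Vert_l p$ to get $\Vert p\Vert_l\le\Vert p\Vert_l^2$, while you invoke properness of the cone to force $p=0$ when $\Vert p\Vert_l<1$; both are immediate from the same key inequality.
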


\begin{proof}
Let $p\in \mathbb{OP}_l(\mathbb{X}).$ Then $0\leq p\leq e^l$ so that $\Vert p\Vert_l\leq \max \lbrace \Vert 0\Vert_l,\Vert e^l\Vert_l \rbrace=\Vert e^l\Vert_l=1.$ Next, by Proposition \ref{7}, we also have $0\leq p \leq \Vert p \Vert_l p.$ Thus $\Vert p\Vert_l\leq \max \lbrace \Vert 0\Vert_l,\Vert \Vert p\Vert_l p \Vert_l \rbrace=\Vert p\Vert_l^2$ and consequently $1\leq \Vert p\Vert_l$ for $p\neq 0.$ Finally, we conclude that $\Vert p \Vert_l = 1.$
\end{proof}

We generalize the example \ref{8} for order projections in absolute matrix order unit spaces by this immediate consequence.  

\begin{corollary}
Let $\mathbb{X}$ be an absolute matrix order unit space and $p\in \mathbb{OP}(\mathbb{X})\setminus \lbrace 0\rbrace,$ then $\mathbb{X}_p$ forms an absolute matrix order unit ideal of $\mathbb{X}.$ 
\end{corollary}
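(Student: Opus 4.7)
The plan is to verify that $p$ satisfies the two hypotheses of Theorem \ref{4}, namely $\Vert p\Vert = 1$ and $p \in \vert \mathcal{M}\vert_\infty(\mathbb{X})$, and then invoke that theorem directly. Once these are established, Theorem \ref{4} immediately furnishes $(\mathbb{X}_p,\lbrace \mathbb{M}_m(\mathbb{X}_p)^+\rbrace, \lbrace \vert\cdot\vert_{l,m}\rbrace, p)$ as an absolute matrix order unit ideal of $\mathbb{X}$.

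The norm condition is handled first: since $p \in \mathbb{OP}(\mathbb{X})\setminus\lbrace 0\rbrace$, Proposition \ref{1} gives $\Vert p\Vert = 1$ at once.

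To verify the absolute matrix order unit property $p \in \vert \mathcal{M}\vert_\infty(\mathbb{X})$, by Definition \ref{10} I need to show that $p^l$ has the absolute order unit property in $\mathbb{M}_l(\mathbb{X})_{sa}$ for every $l\in \mathbb{N}$. Starting from $p \in \mathbb{OP}(\mathbb{X})$, an inductive application of Lemma \ref{6} to the block-diagonal construction $p^l = p\circledast p\circledast \cdots \circledast p$ yields $p^l \in \mathbb{OP}_l(\mathbb{X})$. In particular $0\le p^l \le e^l$, so Proposition \ref{7} applies: it implies that both $p^l$ and $e^l - p^l$ lie in $\vert \mathcal{M}\vert_l(\mathbb{X})$. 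In particular $p^l \in \vert \mathcal{M}\vert_l(\mathbb{X})$ for every $l$, which is precisely the defining condition for $p \in \vert \mathcal{M}\vert_\infty(\mathbb{X})$.

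With $\Vert p\Vert = 1$ and $p\in \vert \mathcal{M}\vert_\infty(\mathbb{X})$ in hand, Theorem \ref{4} is directly applicable and gives the conclusion. There is no real obstacle: the corollary is essentially the specialization of Theorem \ref{4} to non-zero order projections, with Proposition \ref{1}, Lemma \ref{6}, and Proposition \ref{7} providing in one line the hypotheses that were assumed in the more general statement.
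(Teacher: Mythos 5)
Your proposal is correct and is exactly the argument the paper intends: the corollary is presented as an immediate consequence of Proposition \ref{1} (giving $\Vert p\Vert=1$) combined with Lemma \ref{6} and Proposition \ref{7} (giving $p^l\in\vert\mathcal{M}\vert_l(\mathbb{X})$ for all $l$, hence $p\in\vert\mathcal{M}\vert_\infty(\mathbb{X})$), after which Theorem \ref{4} applies. No further comment is needed.
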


The notion of absolute and completely absolute value presrving maps have been introduced and studied in \cite{K19}. The following result tells that kernels of completely absolute value preserving maps are absolute matrix order ideals.

\begin{theorem}
Let $\mathbb{X}$ and $\mathbb{Y}$ be absolutely matrix ordered spaces and $\varphi:\mathbb{X}\to \mathbb{Y}$ be a completely absolute value preserving map. Put $Ker(\varphi)=\lbrace x\in \mathbb{X}:\varphi(x)=0\rbrace.$ Then the following statements are true:
\begin{enumerate}
\item[(1)] $\mathbb{M}_{l,m}(Ker(\varphi))=Ker(\varphi_{l,m})$ for every pair $l,m\in \mathbb{N}.$
\item[(2)] $Ker(\varphi_l)$ is self-adjoint for every $l\in \mathbb{N}.$
\item[(3)] $Ker(\varphi)$ is an absolutely matrix order ideal of $\mathbb{X}.$ 
\item[(4)] $\varphi=0$ if and only if $Ker(\varphi)=\mathbb{X}$ if and only if $Ker(\varphi)^+=\mathbb{X}^+.$
\end{enumerate}
Moreover, if $X$ is absolute matrix order unit space, then
\begin{enumerate}
\item[(5)] $\varphi=0$ if and only if $\varphi(e)=0.$
\end{enumerate}
\end{theorem}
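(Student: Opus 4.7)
The plan is to treat the five parts in order, resting everything on two preparatory structural facts about a completely absolute value preserving map $\varphi$: it is positive, and it is $*$-preserving. Positivity follows from the identity $\vert y\vert_l=y$ for $y\in \mathbb{M}_l(\mathbb{X})^+$, which yields $\varphi_l(y)=\varphi_l(\vert y\vert_l)=\vert\varphi_l(y)\vert_l\in \mathbb{M}_l(\mathbb{Y})^+$. Combining positivity with the canonical decompositions $x=x_+-x_-$ on $\mathbb{M}_l(\mathbb{X})_{sa}$ (available because $\mathbb{M}_l(\mathbb{X})$ is absolutely ordered) and $z=\operatorname{Re}(z)+i\operatorname{Im}(z)$ on $\mathbb{M}_l(\mathbb{X})$ then yields $\varphi_l(z^*)=\varphi_l(z)^*$.

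For (1), I would simply unpack the entrywise definition $\varphi_{l,m}([x_{i,j}])=[\varphi(x_{i,j})]$: this matrix vanishes iff every $\varphi(x_{i,j})$ does, i.e.\ iff $[x_{i,j}]\in \mathbb{M}_{l,m}(\mathrm{Ker}(\varphi))$. For (2), given $z\in \mathrm{Ker}(\varphi_l)$, the $*$-preservation established above gives $\varphi_l(z^*)=\varphi_l(z)^*=0$, so $z^*\in \mathrm{Ker}(\varphi_l)$.

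For (3), I would verify the two conditions of Definition \ref{12} in turn. The subspace and self-adjointness clauses are immediate from linearity and (2). The order-ideal clause: if $0\leq y\leq z$ in $\mathbb{M}_l(\mathbb{X})_{sa}$ with $\varphi_l(z)=0$, then positivity of $\varphi_l$ forces $0\leq \varphi_l(y)\leq 0$, so $y\in \mathrm{Ker}(\varphi_l)=\mathbb{M}_l(\mathrm{Ker}(\varphi))$ by (1), and $y\in \mathbb{M}_l(\mathbb{X})^+$ places it in $\mathbb{M}_l(\mathrm{Ker}(\varphi))^+$. Well-definedness of $\vert\cdot\vert_{l,m}$ on $\mathbb{M}_{l,m}(\mathrm{Ker}(\varphi))$ is the heart of this part: for $x\in \mathrm{Ker}(\varphi_{l,m})$ the defining identity $\varphi_m(\vert x\vert_{l,m})=\vert\varphi_{l,m}(x)\vert_{l,m}=0$ places $\vert x\vert_{l,m}$ in $\mathrm{Ker}(\varphi_m)\cap \mathbb{M}_m(\mathbb{X})^+=\mathbb{M}_m(\mathrm{Ker}(\varphi))^+$.

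For (4), the equivalence $\varphi=0 \Leftrightarrow \mathrm{Ker}(\varphi)=\mathbb{X}$ is tautological, while $\mathrm{Ker}(\varphi)^+=\mathbb{X}^+\Rightarrow \mathrm{Ker}(\varphi)=\mathbb{X}$ uses the same four-positive-element decomposition to express any $x\in \mathbb{X}$ as a $\mathbb{C}$-linear combination of elements of $\mathbb{X}^+\subseteq \mathrm{Ker}(\varphi)$. For (5), the nontrivial direction $\varphi(e)=0\Rightarrow \varphi=0$ uses the order unit: for $x\in \mathbb{X}^+$ one has $0\leq x\leq \Vert x\Vert e$, and positivity gives $0\leq \varphi(x)\leq \Vert x\Vert\varphi(e)=0$, so $\mathbb{X}^+\subseteq \mathrm{Ker}(\varphi)^+$ and (4) concludes $\varphi=0$. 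The only genuine obstacle is the preparatory remark that complete absolute value preservation implies both positivity and $*$-preservation; with these in hand, every remaining step reduces to a short computation.
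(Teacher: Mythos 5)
Your proposal is correct and follows the same overall architecture as the paper's proof: entrywise unpacking for (1), $*$-preservation for (2), reduction to order ideals plus well-definedness of $\vert\cdot\vert_{l,m}$ for (3), and the decomposition $x=(x_1^+-x_1^-)+i(x_2^+-x_2^-)$ together with the order unit for (4) and (5). The differences are in how the supporting facts are obtained, and they make your version more self-contained. Where the paper simply asserts $\varphi_l(x^*)=\varphi_l(x)^*$ and invokes \cite[Theorem 2.7]{K19} for the statement that kernels of absolute value preserving maps are order ideals, you derive both from first principles: positivity from $\vert y\vert_l=y$ on the cone, $*$-preservation from positivity plus the orthogonal and real--imaginary decompositions, and the order-ideal property from the squeeze $0\leq\varphi_l(y)\leq\varphi_l(z)=0$ together with properness of $\mathbb{M}_l(\mathbb{Y})^+$. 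All of this is sound. The one step you should not treat as purely definitional is the identity $\varphi_m(\vert x\vert_{l,m})=\vert\varphi_{l,m}(x)\vert_{l,m}=0$ for \emph{rectangular} $x\in\mathbb{M}_{l,m}(Ker(\varphi))$: the paper does not use it directly, but instead pads $x$ to a square matrix $\begin{bmatrix}x&0\end{bmatrix}$ and computes $\varphi_m(\vert x\vert_{l,m})\circledast 0_{l-m}=\varphi_l\bigl(\bigl\vert\begin{bmatrix}x&0\end{bmatrix}\bigr\vert_l\bigr)=\bigl\vert\varphi_l\bigl(\begin{bmatrix}x&0\end{bmatrix}\bigr)\bigr\vert_l=0$ via Proposition \ref{9}(4). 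If the definition of completely absolute value preserving in \cite{K19} that you are working from already covers all of $\mathbb{M}_{l,m}(\mathbb{X})$, your shortcut is legitimate; if it is stated only for square matrices, you need this one-line zero-padding reduction, so it is worth inserting it explicitly. With that caveat addressed, every part of your argument goes through.
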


\begin{proof}
\begin{enumerate}
\item[(1)] Let $[x_{i,j}]\in \mathbb{M}_{l,m}(Ker(\varphi)).$ Then $\varphi(x_{i,j})=0$ for every pair of $i$ and $j$ and consequently $\varphi_{l,m}([x_{i,j}])=[\varphi(x_{i,j})]=0.$ Thus $[x_{i,j}]\in Ker(\varphi_{l,m})$ so that $\mathbb{M}_{l,m}(Ker(\varphi))\subset Ker(\varphi_{l,m}).$ Tracing back the proof, we also conclude that $Ker(\varphi_{l,m})\subset \mathbb{M}_{l,m}(Ker(\varphi)).$ Hence $\mathbb{M}_{l,m}(Ker(\varphi))=Ker(\varphi_{l,m}).$
\item[(2)] $\varphi$ is absolute value preserving, we have $\varphi_l(x^*)=\varphi_l(x)^*$ for every $x \in \mathbb{M}_l(\mathbb{X})$ and $l\in \mathbb{N}.$ Now, the result follows.
\item[(3)] Put $\overline{\varphi}_l(x)=\varphi_l(x)$ for all $x\in \mathbb{M}_l(\mathbb{X})_{sa}.$ Then $\overline{\varphi}_l:\mathbb{M}_l(\mathbb{X})_{sa}\to \mathbb{M}_l(\mathbb{Y})_{sa}$ defines an absolute value preserving map with $Ker(\overline{\varphi}_l)=Ker(\varphi_l)_{sa}.$ By \cite[Theorem 2.7]{K19}, $Ker(\overline{\varphi}_l)$ is an order ideal of $\mathbb{M}_l(\mathbb{X})_{sa}.$ Thus $Ker(\phi)$ forms a matrix order ideal of $\mathbb{X}.$ Next, let $x=[x_{i,j}]\in \mathbb{M}_{l,m}(Ker(\varphi)).$ Without loss of genearlity, assume that $l\geq m$ so that $[x,0]\in \mathbb{M}_{l-m}(Ker(\varphi)).$ Then $\varphi_l([x,0])=0$ and consequently by Proposition \ref{9}(4), we have $\varphi_m(\vert x\vert_{l,m}) \circledast 0_{l-m}=\varphi_l(\vert x\vert_{l,m} \circledast 0_{l-m})=\varphi_l(\vert [x,0]\vert_l)=\vert \varphi_l([x,0])\vert_l=0.$ Thus $\vert x\vert_{l,m} \in \mathbb{M}_m(Ker(\varphi))^+$ so that $\vert \cdot \vert_{l,m}:\mathbb{M}_{l,m}(Ker(\varphi))\to \mathbb{M}_m(\varphi)^+$ is well defined for every pair of $l,m\in \mathbb{N}.$ Similarily, the case $l<m$ can be taken care. Hence the result follows.
\item[(4)] Let $\varphi=0.$ Then $Ker(\varphi)=\mathbb{X}$ so that $Ker(\varphi)^+=\mathbb{X}^+.$ Conversely, assume that $Ker(\varphi)^+=\mathbb{X}^+.$ Let $x\in \mathbb{X}_{sa}$ and $x=x^+-x^-$ be the orthogonal decomposition of $x.$ Since $\varphi(x^+)=\varphi(x^-)=0,$ we get $\varphi(x)=\varphi(x^+)-\varphi(x^-)=0.$ Next, let $x\in \mathbb{X}.$ Put $x_1=\frac{x-x^*}{2}$ and $x_2=\frac{x-x^*}{2i}$ so that $x_1,x_2\in \mathbb{X}_{sa}$ with $x=x_1+ix_2.$ Then $\varphi(x_1)=\varphi(x_2)=0$ and consequently $\varphi(x)=\varphi(x_1)+i\varphi(x_2)=0.$ Hence $\varphi=0.$
\end{enumerate}

If $X$ is an absolute matrix order unit space, then

\begin{enumerate}
\item[(5)] $\varphi=0$ implies $\varphi(e)=0.$ Conversely, assume that $\varphi(e)=0.$ Then $e\in Ker(\varphi).$ Now, let $x\in \mathbb{X}^+$ so that $0\leq x\leq \Vert x\Vert e.$ Since $Ker(\varphi)$ is an order ideal, we get $x\in Ker(\varphi)~i.e.~\varphi(x)=0.$ Thus $\phi(x)=0$ for all $x\in \mathbb{X}^+$ so that $Ker(\varphi)^+=\mathbb{X}^+.$ Hence, by (4), the result follows. 
\end{enumerate}
\end{proof}

Finally, we prove that $\frac{\mathbb{X}}{Ker(\varphi)},$ the quotient of $\mathbb{X}$ by $Ker(\varphi),$ is always an absolutely matrix ordered space. However, under the assumption of absolute matrix order unit property, it is identified with $\varphi(\mathbb{X})$ as an absolute matrix order unit ideal of $\mathbb{Y}.$ 

\begin{theorem}\label{13}
Let $\mathbb{X}$ and $\mathbb{Y}$ be absolutely matrix ordered spaces and $\varphi:\mathbb{X}\to \mathbb{Y}$ be a completely absolute value preserving map. Put $\mathbb{X}_0=\frac{\mathbb{X}}{Ker(\varphi)}.$ Then 
\begin{enumerate}
\item[(1)] $\mathbb{M}_l(\mathbb{X}_0)=\frac{\mathbb{M}_l(\mathbb{X})}{Ker(\varphi_l)}.$
\end{enumerate}
We write: $\mathbb{M}_l(\mathbb{X}_0)^+=\lbrace Ker(\varphi_l)+x:x\in \mathbb{M}_l(\mathbb{X})^+\rbrace.$ 
\begin{enumerate}
\item[(2)] $Ker(\varphi_l)+x\in \mathbb{M}_l(\mathbb{X}_0)^+$ if and only if $\varphi_l(x)\geq 0.$
\item[(3)] $\vert \cdot\vert_{l,m}^0:\mathbb{M}_{l,m}(\mathbb{X}_0)\to \mathbb{M}_l(\mathbb{X}_0)^+$ given by $Ker(\varphi_{l,m})+x\longmapsto Ker(\varphi_l)+\vert x\vert_{l,m}$ are well defined maps.
\item[(4)] $\left(\mathbb{X}_0,\lbrace \mathbb{M}_l(X_0)^+\rbrace,\lbrace \vert \cdot\vert^0_{l,m}\rbrace \right)$ is an absolutely matrix ordered space identified with $\varphi(\mathbb{X}).$  
\end{enumerate}
Moreover, if $\mathbb{X}~\textrm{and}~\mathbb{Y}$ are absolute matrix order unit spaces with $\Vert \varphi(e)\Vert=1$ and $\varphi(e)\in \vert \mathcal{M}\vert_\infty(\mathbb{Y}),$ then we also have: 
\begin{enumerate}
\item[(5)]$\mathbb{X}_0$ is an absolute matrix order ideal of $\mathbb{Y}$ as identified with $\mathbb{Y}_{\varphi(e)}=\varphi(\mathbb{X}).$
\end{enumerate}
\end{theorem}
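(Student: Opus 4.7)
My plan is to establish (1)--(4) as structural consequences of the kernel identification $\mathbb{M}_l(Ker(\varphi))=Ker(\varphi_l)$ from the preceding theorem together with the preservation properties of $\varphi$, and then to derive (5) by applying Theorem \ref{4} to $\varphi(e)$.

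For (1), the coset map $[x_{i,j}]+Ker(\varphi_l)\mapsto [x_{i,j}+Ker(\varphi)]$ is a canonical linear bijection precisely because of the kernel identification. For (2), the direction $\Rightarrow$ is immediate from positivity of $\varphi_l$ (which itself follows from complete absolute value preservation); for $\Leftarrow$, given $\varphi_l(x)\geq 0$ I decompose $x=x_1+ix_2$ into self-adjoint parts so that self-adjointness of $\varphi_l(x)$ forces $\varphi_l(x_2)=0$, and then use the orthogonal decomposition $x_1=x_1^+-x_1^-$ combined with $\varphi_l(\vert x_1\vert)=\vert \varphi_l(x_1)\vert=\varphi_l(x_1)$ and $\vert x_1\vert=x_1^++x_1^-$ to obtain $\varphi_l(x_1^-)=0$; hence $x+Ker(\varphi_l)=x_1^++Ker(\varphi_l)\in\mathbb{M}_l(\mathbb{X}_0)^+$. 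Part (3) reduces to complete absolute value preservation: if $x-y\in Ker(\varphi_{l,m})$ then $\varphi_{l,m}(x)=\varphi_{l,m}(y)$, so $\varphi_m(\vert x\vert_{l,m})=\vert\varphi_{l,m}(x)\vert_{l,m}=\vert\varphi_{l,m}(y)\vert_{l,m}=\varphi_m(\vert y\vert_{l,m})$, yielding $\vert x\vert_{l,m}-\vert y\vert_{l,m}\in Ker(\varphi_m)$.

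For (4) I would introduce $\tilde{\varphi}:\mathbb{X}_0\to \varphi(\mathbb{X})$ by $x+Ker(\varphi)\mapsto \varphi(x)$; this is a linear bijection whose entry-wise extensions $\tilde{\varphi}_l$ bijectively carry $\mathbb{M}_l(\mathbb{X}_0)^+$ onto $\varphi_l(\mathbb{M}_l(\mathbb{X}))\cap\mathbb{M}_l(\mathbb{Y})^+$ by (2), and intertwine $\vert\cdot\vert^0_{l,m}$ with the ambient $\vert\cdot\vert_{l,m}$ by (3) together with complete absolute value preservation. Thus the prescribed structure on $\mathbb{X}_0$ is precisely the pullback of the absolutely matrix ordered structure that $\varphi(\mathbb{X})$ inherits as a subspace of $\mathbb{Y}$, which both proves (4) and clarifies that all claimed cones and absolute values are well-defined.

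For (5) I would invoke Theorem \ref{4} with $\varphi(e)\in\vert\mathcal{M}\vert_\infty(\mathbb{Y})$ and $\Vert\varphi(e)\Vert=1$ to produce the absolute matrix order unit ideal $\mathbb{Y}_{\varphi(e)}$ of $\mathbb{Y}$. The inclusion $\varphi(\mathbb{X})\subseteq\mathbb{Y}_{\varphi(e)}$ is straightforward: for $x\in\mathbb{M}_l(\mathbb{X})$, the order unit property of $e$ gives $\begin{bmatrix}\Vert x\Vert_l e^l & x\\x^* & \Vert x\Vert_l e^l\end{bmatrix}\in\mathbb{M}_{2l}(\mathbb{X})^+$, and the positive map $\varphi_{2l}$ transfers this to $\mathbb{M}_{2l}(\mathbb{Y})^+$, certifying $\varphi_l(x)\in\mathbb{M}_l(\mathbb{Y})_{\varphi(e)^l}=\mathbb{M}_l(\mathbb{Y}_{\varphi(e)})$ via Theorem \ref{4}(1). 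I expect the main obstacle to be the reverse identification $\mathbb{Y}_{\varphi(e)}\subseteq\varphi(\mathbb{X})$, which requires producing a preimage in $\mathbb{X}$ for each element dominated by $\varphi(e)$; this should exploit the absolute matrix order unit property of $\varphi(e)$ and the norm equality $\Vert\cdot\Vert^{\varphi(e)}_l=\Vert\cdot\Vert_l$ from Theorem \ref{4}(4) to lift dominated elements back through $\varphi$. Once this identification is secured, the conclusion that $\mathbb{X}_0$ (via (4)) is an absolute matrix order (unit) ideal of $\mathbb{Y}$ is an immediate reading of Theorem \ref{4}.
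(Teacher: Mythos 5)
Parts (1)--(4) of your proposal are correct and follow essentially the paper's route. The only real variations are cosmetic: in (2) you derive $\varphi_l(x_1^-)=0$ from $\vert x_1\vert=x_1^++x_1^-$ and $\varphi_l(\vert x_1\vert)=\vert\varphi_l(x_1)\vert=\varphi_l(x_1)$, where the paper cites the fact that absolute value preserving maps respect the orthogonal decomposition ($\varphi_l(x)^{\pm}=\varphi_l(x^{\pm})$) --- these are the same computation; in (3) the paper routes the rectangular case through the square case by padding with zeros (Proposition \ref{9}(4)), which you elide but which is harmless; and in (4) you organize the verification as a pullback along $\tilde\varphi$ rather than checking axioms 2(a),(b) of Definition \ref{5} directly on cosets, which is an equivalent and arguably cleaner bookkeeping of the same facts.

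Part (5) contains a genuine gap. You correctly prove the forward inclusion $\varphi(\mathbb{X})\subseteq\mathbb{Y}_{\varphi(e)}$, but you leave the reverse inclusion $\mathbb{Y}_{\varphi(e)}\subseteq\varphi(\mathbb{X})$ as something that ``should'' follow by lifting dominated elements back through $\varphi$. That step is not merely missing --- it would fail. Take $\mathbb{X}=\mathbb{C}$, $\mathbb{Y}=\mathbb{M}_3(\mathbb{C})$, $p$ a rank-two projection, and $\varphi(\lambda)=\lambda p$; this $\varphi$ is completely absolute value preserving, $\varphi(e)=p$ satisfies all the hypotheses, yet $\varphi(\mathbb{X})=\mathbb{C}p$ is a proper subspace of $\mathbb{Y}_p=p\,\mathbb{M}_3(\mathbb{C})\,p$. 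So no lifting argument can produce a preimage for every element of $\mathbb{Y}_{\varphi(e)}$, and the set equality $\mathbb{Y}_{\varphi(e)}=\varphi(\mathbb{X})$ asserted in the statement should be read with caution. What the paper actually does for (5) is different and does not depend on that equality: it shows $Ker(\varphi)+e$ is an order unit for $\mathbb{X}_0$, that each cone $\mathbb{M}_l(\mathbb{X}_0)^+$ is proper and Archimedean (both via part (2)), and then computes the resulting order-unit norm $\Vert Ker(\varphi_l)+x\Vert_l^0=\Vert\varphi_l(x)\Vert_l^{\varphi(e)}=\Vert\varphi_l(x)\Vert_l$, where the last equality is Theorem \ref{4}(4) applied to $\varphi_l(x)\in\mathbb{M}_l(\mathbb{Y}_{\varphi(e)})$ --- which uses exactly the forward inclusion you established. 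You should therefore replace your planned reverse-inclusion argument with the verification of properness, Archimedeanness, and this norm identity; that is what realizes $\varphi(\mathbb{X})$ as an absolute matrix order (unit) ideal in the sense of Definition \ref{12}(3).
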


\begin{proof}
\begin{enumerate}
\item[(1)] Observe that 
\begin{eqnarray*}
\mathbb{M}_l(\mathbb{X}_0) \ni \begin{bmatrix} Ker(\varphi)+x_{i,j}\end{bmatrix} &=& \lbrace \begin{bmatrix} z_{i,j}+x_{i,j}\end{bmatrix}:z_{i,j}\in Ker(\varphi)\rbrace \\
&=& \lbrace \begin{bmatrix} z_{i,j}\end{bmatrix}:z_{i,j}\in Ker(\varphi)\rbrace + \begin{bmatrix} x_{i,j}\end{bmatrix}\\
&=& Ker(\varphi_l)+ \begin{bmatrix} x_{i,j}\end{bmatrix}\in \frac{\mathbb{M}_l(\mathbb{X})}{Ker(\varphi_l)}.
\end{eqnarray*}

Thus $\mathbb{M}_l(\mathbb{X}_0)=\frac{\mathbb{M}_l(\mathbb{X})}{Ker(\varphi_l)}$ so that $\mathbb{M}_l(\mathbb{X}_0)^+=\lbrace Ker(\varphi_l)+x:x\in \mathbb{M}_l(\mathbb{X})^+\rbrace.$

\item[(2)] Let $Ker(\varphi_l)+x\in \mathbb{M}_l(\mathbb{X}_0)^+.$ Then there exists $z\in \mathbb{M}_l(\mathbb{X})^+$ such that $Ker(\varphi_l)+x=Ker(\varphi_l)+z.$ Thus $x-z\in Ker(\varphi_l)$ and we have $\varphi_l(x)=\varphi_l(z)\geq 0.$ Conversely, let $\varphi_l(x)\geq 0.$ For  $x_1=\frac{x-x^*}{2}$ and $x_2=\frac{x-x^*}{2i}\in \mathbb{M}_l(\mathbb{X})_{sa}$ such that $x_1+ix_2=x,$ we have $\varphi(x_1)+i\varphi(x_2)=\varphi(x)\geq 0.$ In this case, we get $\varphi_l(x_2)=0$ so that $Ker(\varphi_l)+x=Ker(\varphi_l)+x_1.$ Therefore, without loss of generality, we assume that $x \in \mathbb{M}_l(\mathbb{X})_{sa}.$ Next, let $x=x^+-x^-$ be the orthogonal decomposition of $x.$ By \cite[Proposition 2.6]{K19}, we get that $\varphi_l(x)^+=\varphi_l(x^+)$ and $\varphi_l(x)^-=\varphi_l(x^-).$ Since $\varphi_l(x)\geq 0$ and orthogonal decomposition is always unique, we conclude that $\varphi_l(x^-)=0.$ Thus $Ker(\varphi_l)+x=Ker(\varphi_l)+x^+ \in \mathbb{M}_l(\mathbb{X}_0)^+.$ 

\item[(3)] Let $Ker(\varphi_{l,m})+x=Ker(\varphi_{l,m})+y.$ Then $x-y\in Ker(\varphi_{l,m})$ so that $\varphi_{l,m}(x)=\varphi_{l,m}(y).$ Without loss of generality, we assume that $l\geq m.$ Since $\varphi_l$ is absolute value preserving, by Proposition \ref{9}(4), we get
\begin{eqnarray*}
\varphi_m(\vert x\vert_{l,m})\circledast 0 &=& \varphi_l(\vert \begin{bmatrix}x & 0\end{bmatrix}\vert_l)\\
&=& \vert \varphi_l(\begin{bmatrix} x & 0\end{bmatrix} ) \vert_l \\
&=& \vert \begin{bmatrix} \varphi_{l,m}(x) & 0\end{bmatrix}  \vert_l\\
&=& \vert \begin{bmatrix} \varphi_{l,m}(y) & 0\end{bmatrix}  \vert_l\\
&=& \varphi_m(\vert y\vert_{l,m})\circledast 0.
\end{eqnarray*}

and consequently $\varphi_m(\vert x\vert_{l,m})=\varphi_m(\vert y\vert_{l,m}).$ Thus $\vert x\vert_{l,m}-\vert y\vert_{l,m}\in Ker(\varphi_m)$ so that $Ker(\varphi_m)+\vert x\vert_{l,m}=Ker(\varphi_m)+\vert y\vert_{l,m}.$ Hence the maps $\vert \cdot\vert^0_{l,m}$ are well-defined.

\item[(4)] Let $x\in \mathbb{M}_{k,l}(\mathbb{X})$ and $y\in \mathbb{M}_{m,n}(\mathbb{X}).$ Then by \ref{5}(2)(b), we get
\begin{eqnarray*}
\vert (Ker(\varphi_{l,m})+x)\circledast (Ker(\varphi_{l,m})+y)\vert^0_{k+m,l+n} &=& \vert Ker(\varphi_{k+m,l+n})+(x\circledast y)\vert_{k+m,l+n}^0 \\
&=& Ker(\varphi_{l+n})+(\vert x\circledast y\vert_{k+m,l+n})\\
&=& Ker(\varphi_{l+n})+(\vert x\vert_{k,l}\circledast \vert y\vert_{m,n})\\
&=& (Ker(\varphi_l)+\vert x\vert_{k,l})\circledast (Ker(\varphi_n)+\vert y\vert_{m,n})\\
&=& \vert Ker(\varphi_{k,l})+x\vert_{k,l}\circledast \vert Ker(\varphi_{m,n})+y\vert_{m,n}.
\end{eqnarray*}

Next, let $\varsigma_1 \in \mathbb{M}_{k,m}$ and $\varsigma_2 \in \mathbb{M}_{n,l}.$ Again by \ref{5}(2)(a), we have:

\begin{eqnarray*}
\vert \varsigma_1 (Ker(\varphi_{m,n})+y)\varsigma_2\vert_{k,l}^0 &=& \vert Ker(\varphi_{k,l})+\varsigma_1 y\varsigma_2\vert_{k,l}^0\\
&=& Ker(\varphi_l)+\vert \varsigma_1 y\varsigma_2\vert_{k,l}^0\\
&\leq& Ker(\varphi_l)+\Vert \varsigma_1\Vert \vert \vert y\vert_{m,n}\varsigma_2\vert_{n,l}^0\\
&=& \Vert \varsigma_1\Vert \vert \vert Ker(\varphi_{m,n})+ y\vert_{m,n}^0\varsigma_2\vert_{n,l}^0.
\end{eqnarray*}

Thus $\mathbb{X}_0$ is an absolutely matrix ordered space. Now, $\mathbb{X}_0$ is identified with $\varphi(\mathbb{X})$ is immediately followed by first isomorphism theorem for vector spaces.
\end{enumerate}

Assume that $\mathbb{X}~\textrm{and}~\mathbb{Y}$ are absolute matrix order unit spaces with $\Vert \varphi(e)\Vert=1$ and $\varphi(e)\in \vert \mathcal{M}\vert_\infty(\mathbb{Y}).$ Then

\begin{enumerate}
\item[(5)] For $x\in \mathbb{M}_l(\mathbb{X})_{sa},$ there exists $\epsilon >0$ such that $\epsilon e^l\pm x\in \mathbb{M}_l(\mathbb{X})^+.$ Then  $\epsilon (Ker(\varphi)+e)^l\pm (Ker(\varphi_l)+x)=\epsilon (Ker(\varphi_l)+e^l)+(Ker(\varphi_l)\pm x)=Ker(\varphi_l)+(\epsilon e^l\pm x)\in \mathbb{M}_l(\mathbb{X}_0)^+.$ Thus $Ker(\varphi)+e$ is the order unit for $\mathbb{X}_0.$

Let $Ker(\varphi_l)\pm x\geq 0.$ By (2), we have $\pm \varphi_l(x)= \varphi_l(\pm x)\geq 0.$ Since $\mathbb{M}_l(\mathbb{X})^+$ is proper, we get that $\varphi_l(x)=0~i.e.~Ker(\varphi_l)+ x= 0.$ Thus $\mathbb{M}_l(\mathbb{X}_0)^+$ is proper for every $l\in \mathbb{N}.$

Let $x\geq 0$ with $\epsilon (Ker(\varphi_l)+x)+(Ker(\varphi_l)+z)\geq 0$ for all $\epsilon >0.$ Again, by (2), we have $\epsilon \varphi_l(x)+\varphi_l(z)=\varphi_l(\epsilon x+z)\geq 0.$  Since $\varphi_l(x)\geq 0~\textrm{and}~\mathbb{M}_l(\mathbb{X})^+$ is Archimedean, we get that $\varphi_l(z)\geq 0.$ Then $Ker(\varphi_l)+z\geq 0.$ Therefore $\mathbb{M}_l(\mathbb{X}_0)^+$ is Archimedean for every $l\in \mathbb{N}.$

As $\mathbb{M}_l(\mathbb{X})_0^+$ is proper and Archimedean for all $l\in \mathbb{N}.$ Therefore order unit $Ker(\varphi)+e$ determines a matrix norm $\lbrace \Vert \cdot \Vert_l^0\rbrace$ on $\mathbb{X}_0$ defined in the following way:

\begin{eqnarray*}
\Vert Ker(\varphi_l)+x\Vert_l^0 &=& \inf \left \lbrace \epsilon >0: Ker(\varphi_{2l})+ \left (\begin{bmatrix}\epsilon e^l & \pm x\\ \pm x^* & \epsilon e^l\end{bmatrix}\right)\geq 0\right\rbrace \\
&=& \inf \left \lbrace \epsilon >0: \begin{bmatrix}\epsilon \varphi(e)^l & \pm \varphi_l(x)\\ \pm \varphi_l(x)^* & \epsilon \varphi(e)^l\end{bmatrix} \geq 0\right \rbrace
\end{eqnarray*}

for every $l \in \mathbb{N}.$

Finally, assume that $\varphi(e)\in \vert \mathcal{M} \vert_\infty(Y)$ such that $\Vert \varphi(e)\Vert=1.$ In this case, we show that $\mathbb{X}_0\cong \mathbb{Y}_{\varphi(x)}=\varphi(\mathbb{X}).$ For that let $Ker(\varphi_l)+x\geq 0.$ Then by Theorem \ref{4}, we get that

\begin{eqnarray*}
\Vert Ker(\varphi_l)+x\Vert_l^0 &=& \inf \lbrace \epsilon >0: \epsilon (Ker(\varphi)+e)\pm (Ker(\varphi)+x) \geq 0\rbrace \\
&=& \inf \lbrace \epsilon >0: Ker(\varphi)+\epsilon e\pm x \geq 0\rbrace \\
&=& \inf \lbrace \epsilon >0: \epsilon \varphi(e)\pm \varphi(x) \geq 0\rbrace\\
&=& \Vert \varphi(x)\Vert_l^{\varphi(e)}\\
&=& \Vert \varphi(x)\Vert_l.
\end{eqnarray*}
\end{enumerate}
Hence the result follows.
\end{proof}

\section{Relation between $\mathcal{K}_0(\mathbb{X}_x)$ and $\mathcal{K}_0(\mathbb{X})$}

In this section, we derive a relation between $\mathcal{K}_0(\mathbb{X}_x)$ and $\mathcal{K}_0(\mathbb{X}).$ For that, we start with the following characterization of order projections in $\mathbb{X}_x$ in terms of order projections in $\mathbb{X}.$

\begin{lemma}
In an absolute matrix order unit space $\mathbb{X},$ let $x\in \mathbb{X}^+$ such that $\Vert x\Vert=1.$ Then $\mathbb{OP}_l(\mathbb{X}_x)  =  \lbrace p \in \mathbb{OP}_l(\mathbb{X}): p \leq x^l \rbrace =  \mathbb{OP}_l(\mathbb{X}) \cap \mathbb{M}_l(\mathbb{X}_x)$ for every $l \in \mathbb{N}.$
\end{lemma}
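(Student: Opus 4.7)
The plan is to prove the two equalities by establishing a cyclic chain of inclusions among the three sets, which I will call $A = \mathbb{OP}_l(\mathbb{X}_x)$, $B = \lbrace p \in \mathbb{OP}_l(\mathbb{X}): p \leq x^l \rbrace$, and $C = \mathbb{OP}_l(\mathbb{X}) \cap \mathbb{M}_l(\mathbb{X}_x)$. Throughout, I will use repeatedly that the absolute value on $\mathbb{X}_x$ is inherited from $\mathbb{X}$ (by Theorem \ref{4}) and that the two matrix norms agree on $\mathbb{M}_l(\mathbb{X}_x)$ (by Theorem \ref{4}(4)). I will also use the characterization of order projections via the absolute order unit property from Proposition \ref{7}, applied in both $\mathbb{X}$ (with order unit $e$) and in $\mathbb{X}_x$ (with order unit $x$).

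For $C \subseteq B$, I would take $p \in C$ and apply Proposition \ref{11}(2) with $y = p$: since $p \in \mathbb{M}_l(\mathbb{X}_x)$ provides the $2l \times 2l$ positivity witness, and since $\|p\|_l = 1$ (Proposition \ref{1}) while $|p|_l = p$, the conclusion gives $p \leq x^l$. For $B \subseteq A$, I would take $p \in B$ and translate $|2p - e^l|_l = e^l$ into the orthogonal decomposition $p \perp (e^l - p)$. Since $\|x\|=1$ gives $x^l \leq e^l$, we have $0 \leq x^l - p \leq e^l - p$, so the downward closure of orthogonality in absolutely matrix ordered spaces yields $p \perp (x^l - p)$, which rewrites as $|2p - x^l|_l = x^l$. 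Combined with $p \in \mathbb{M}_l(\mathbb{X}_x)$ (from $\pm p \leq x^l$ via Theorem \ref{4}(1)), this places $p$ in $A$.

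The harder inclusion is $A \subseteq C$: given $p \in A$, membership in $\mathbb{M}_l(\mathbb{X}_x)$ is immediate, so the real content is showing $p \in \mathbb{OP}_l(\mathbb{X})$, i.e.\ $|2p - e^l|_l = e^l$. Applying Proposition \ref{7} inside $\mathbb{X}_x$ gives $p, x^l - p \in \vert\mathcal{M}\vert_l(\mathbb{X}_x)$. These transfer to $\vert\mathcal{M}\vert_l(\mathbb{X})$: for any self-adjoint $y$ with $\pm y \leq \epsilon p$, the chain $\pm y \leq \epsilon p \leq \epsilon x^l$ forces $y \in \mathbb{M}_l(\mathbb{X}_x)$ by Theorem \ref{4}(1), and then the absolute order unit inequality in $\mathbb{X}_x$ plus Theorem \ref{4}(4) delivers $|y| \leq \|y\|_l p$; the case of $x^l - p$ is identical. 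To close the argument via Proposition \ref{7} in $\mathbb{X}$, I would write $e^l - p = (x^l - p) + (e^l - x^l)$ and combine the orthogonality $p \perp (x^l - p)$ (already obtained) with the further orthogonality $p \perp (e^l - x^l)$ and additivity of orthogonality over positive summands to conclude $p \perp (e^l - p)$, hence $|2p - e^l|_l = e^l$.

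The main obstacle is the step $p \perp (e^l - x^l)$. This is the abstract analogue of the $C^*$-algebraic identity $p(e-x) = 0$ whenever $p$ is a projection with $p \leq x$ and $\|x\| \leq 1$, and I expect it to come out of combining the absolute order unit property of $x^l$ in $\mathbb{X}$ (Proposition \ref{11}(2), applied to a suitable witness involving $2p - x^l$ and $e^l - x^l$) with the fact that $p$ and $x^l - p$ already form an orthogonal resolution of $x^l$ inside $\mathbb{X}_x$. Once this orthogonality is in place, the rest of the argument is routine, and the three sets coincide for every $l \in \mathbb{N}$.
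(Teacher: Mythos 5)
Your chain $\mathbb{OP}_l(\mathbb{X}) \cap \mathbb{M}_l(\mathbb{X}_x) \subseteq \lbrace p \in \mathbb{OP}_l(\mathbb{X}) : p \le x^l\rbrace \subseteq \mathbb{OP}_l(\mathbb{X}_x)$ is correct and is essentially the paper's own argument: $\Vert p\Vert_l = 1$ via Proposition \ref{1}, domination $p \le x^l$ via the absolute matrix order unit property of $x$, and then $0 \le x^l - p \le e^l - p$ together with $p \perp (e^l - p)$ and downward closure of $\perp$ to obtain $p \perp (x^l - p)$. (Two small caveats: both you and the paper tacitly need $x \in \vert\mathcal{M}\vert_\infty(\mathbb{X})$ for $\mathbb{X}_x$ to be the object of Theorem \ref{4} and for the step $p \le x^l$, even though the lemma's hypothesis only states $x \in \mathbb{X}^+$ with $\Vert x\Vert = 1$; and Proposition \ref{1} applies only to $p \neq 0$, the case $p = 0$ being trivial.)

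The genuine gap is in the remaining inclusion $\mathbb{OP}_l(\mathbb{X}_x) \subseteq \mathbb{OP}_l(\mathbb{X})$. You correctly isolate the crux --- from $p \perp (x^l - p)$ you must additionally establish $p \perp (e^l - x^l)$ in order to get $p \perp (e^l - p)$ by additivity --- but you do not prove it; ``I expect it to come out of Proposition \ref{11}(2) applied to a suitable witness'' is a conjecture, not an argument. Moreover, Proposition \ref{11}(2) only yields domination inequalities of the form $\vert y\vert_l \le \Vert y\Vert_l x^l$, whereas what you need is the orthogonality identity $\vert p - (e^l - x^l)\vert_l = p + e^l - x^l$, and it is not clear how to extract the latter from the former. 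The circularity trap is also real: the natural route ($0 \le e^l - x^l \le e^l - p$ plus downward closure of $\perp$) presupposes $p \perp (e^l - p)$, which is exactly the conclusion sought. So as written the proof is incomplete at precisely this point. For what it is worth, the paper's own proof establishes only the inclusions you did complete and dismisses the converse with ``hence the result follows,'' so you have not overlooked a device that the paper supplies; but a self-contained proof still requires this step, for instance by showing directly that $e^l - p \in \vert\mathcal{M}\vert_l(\mathbb{X})$ and invoking Proposition \ref{7} in $\mathbb{X}$.
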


\begin{proof}
Let $p\in \mathbb{OP}_l(\mathbb{X}) \cap \mathbb{M}_l(\mathbb{X}_x).$ By proposition \ref{1}, we get that $0\leq p\leq x^l.$ For $\Vert x\Vert =1,$ we also get that $0\leq x^l \leq e^l.$ Then $0\leq x^l-p\leq e^l-p.$ Since $p\in \mathbb{OP}_l(\mathbb{X}),$ we have $e^l\perp e^l-p.$ By \cite[Definition 3.4(4)]{K18}, we conclude $p\perp x^l-p.$ Thus $p\in \mathbb{OP}_l(\mathbb{X}_x).$ Hence the reslut follows.
\end{proof}

\begin{corollary}
In an absolute matrix order unit space $\mathbb{X},$ we have: $\mathbb{OP}_l(\mathbb{X}_p)  =  \lbrace q \in \mathbb{OP}_l(\mathbb{X}): q \leq p^l \rbrace =  \mathbb{OP}_l(\mathbb{X}) \cap \mathbb{M}_l(\mathbb{X}_p)$ for every $p\in \mathbb{OP}(\mathbb{X})\setminus \lbrace 0\rbrace$ and $l\in \mathbb{N}.$ 
\end{corollary}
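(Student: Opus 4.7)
The plan is to apply the preceding lemma directly with $x = p$. Two hypotheses must be checked for the lemma to be applicable: we need $p \in \mathbb{X}^+$ and $\Vert p\Vert = 1$. The first is immediate from Proposition \ref{7}, which ensures that every order projection satisfies $0 \leq p \leq e$. The second is precisely the content of Proposition \ref{1}, which asserts that any nonzero order projection in an absolute matrix order unit space has norm exactly $1$. This is exactly why the hypothesis $p \neq 0$ appears in the statement of the corollary.

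Once these two observations are in place, the conclusion of the lemma, applied verbatim with $x$ replaced by $p$, reads
\begin{equation*}
\mathbb{OP}_l(\mathbb{X}_p) = \lbrace q \in \mathbb{OP}_l(\mathbb{X}) : q \leq p^l \rbrace = \mathbb{OP}_l(\mathbb{X}) \cap \mathbb{M}_l(\mathbb{X}_p)
\end{equation*}
for every $l \in \mathbb{N}$, which is the desired identity. In particular, one also uses Theorem \ref{4}(1) implicitly to know that $\mathbb{X}_p$ is a well-defined absolute matrix order unit ideal to which the notation $\mathbb{OP}_l(\mathbb{X}_p)$ applies; since $p \in \vert \mathcal{M}\vert_\infty(\mathbb{X})$ (being an order projection, by Proposition \ref{7}) and $\Vert p\Vert = 1$, Theorem \ref{4} legitimately produces this ideal.

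There is no real obstacle in this argument. The entire content of the corollary lies in noting that the normalization hypothesis $\Vert x\Vert = 1$ of the lemma is automatic for a nonzero order projection, so that one may specialize the lemma with no additional work.
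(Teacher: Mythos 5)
Your proposal is correct and matches the paper, which states this corollary without proof as an immediate specialization of the preceding lemma to $x=p$; your verification of the hypotheses ($p\geq 0$ and, via Proposition \ref{1}, $\Vert p\Vert=1$ for $p\neq 0$) is exactly the intended reasoning. The only quibble is that $0\leq p\leq e$ is a standing hypothesis of Proposition \ref{7} rather than its conclusion, but it is a basic property of order projections that the paper itself uses without citation.
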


The partial isometric equivalence (denoted by $\sim$) on order projections is defined in \cite{PI19}. Now, we characterize $\sim$ in $\mathbb{X}_x$ in terms of $\sim$ in $\mathbb{X}.$ 

\begin{proposition}\label{2}
In an absolute matrix order unit space $\mathbb{X},$ let $p,q \in \mathbb{OP}_\infty(\mathbb{X}_x)$ for some $x \in \mathbb{X}^+$ such that $\Vert x\Vert=1.$ Then $p \sim q$ in $\mathbb{X}_x$ if and only if $p \sim q$ in $\mathbb{X}.$
\end{proposition}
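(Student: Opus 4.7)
The plan is to reduce both directions of the equivalence to the statement that the \emph{same} partial isometry $u$ works in both spaces; the only real content is pinning down which of $\mathbb{M}_{l,m}(\mathbb{X})$ and $\mathbb{M}_{l,m}(\mathbb{X}_x)$ the witness $u$ lives in.

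For the forward direction ($\Rightarrow$), suppose $p \sim q$ in $\mathbb{X}_x$ via a partial isometry $u \in \mathbb{M}_{l,m}(\mathbb{X}_x)$ with $|u|_{l,m} = q$ and $|u^*|_{m,l} = p$. Since $\mathbb{X}_x \subset \mathbb{X}$ and, by Theorem \ref{4}(5), the absolute value maps of $\mathbb{X}_x$ are just the restrictions of those of $\mathbb{X}$, the same $u$ qualifies as a partial isometry in $\mathbb{M}_{l,m}(\mathbb{X})$ with the same $p$ and $q$ as its absolute values. Hence $p \sim q$ in $\mathbb{X}$.

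The non-trivial direction is ($\Leftarrow$). Assume $p \sim q$ in $\mathbb{X}$ through a partial isometry $u \in \mathbb{M}_{l,m}(\mathbb{X})$ with $|u|_{l,m} = q$ and $|u^*|_{m,l} = p$. By Proposition \ref{9}(3),
\[
\begin{bmatrix} p & u \\ u^* & q \end{bmatrix} \;=\; \begin{bmatrix} |u^*|_{m,l} & u \\ u^* & |u|_{l,m} \end{bmatrix} \;\in\; \mathbb{M}_{l+m}(\mathbb{X})^+.
\]
Since $p, q \in \mathbb{OP}_\infty(\mathbb{X}_x)$, the preceding lemma yields $p \leq x^l$ and $q \leq x^m$, so adding the positive block $(x^l - p) \circledast (x^m - q)$ gives
\[
\begin{bmatrix} x^l & u \\ u^* & x^m \end{bmatrix} \;\in\; \mathbb{M}_{l+m}(\mathbb{X})^+ .
\]
By Theorem \ref{4}(1), this is precisely the defining condition for $u \in \mathbb{M}_{l,m}(\mathbb{X})_{x^l, x^m} = \mathbb{M}_{l,m}(\mathbb{X}_x)$. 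Because $q = |u|_{l,m}$ and $p = |u^*|_{m,l}$ are already order projections in $\mathbb{X}_x$ by hypothesis, $u$ is a partial isometry in $\mathbb{M}_{l,m}(\mathbb{X}_x)$, establishing $p \sim q$ in $\mathbb{X}_x$.

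The main (and only) obstacle is the hinge step in the backward direction: noticing that the $2 \times 2$ positivity of Proposition \ref{9}(3), which encodes the partial-isometric data of $u$, combines with the bounds $p \leq x^l$ and $q \leq x^m$ (given by the characterization of $\mathbb{OP}_l(\mathbb{X}_x)$ from the preceding lemma) to yield exactly the block positivity that Theorem \ref{4}(1) uses to define $\mathbb{M}_{l,m}(\mathbb{X}_x)$. Once this is in place, no further computation or ordering argument is required.
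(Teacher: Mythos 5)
Your proposal is correct and follows essentially the same route as the paper: the substantive direction is showing the witnessing element $u$ (the paper calls it $y$) lies in $\mathbb{M}_{l,m}(\mathbb{X}_x)$, and both arguments do this by combining $p\leq x^l$, $q\leq x^m$ from the preceding lemma with the positivity of the block matrix $\begin{bmatrix} p & u\\ u^* & q\end{bmatrix}$ to obtain $\begin{bmatrix} x^l & u\\ u^* & x^m\end{bmatrix}\in\mathbb{M}_{l+m}(\mathbb{X})^+$. The only cosmetic difference is that you invoke Proposition \ref{9}(3) and add the positive diagonal block $(x^l-p)\circledast(x^m-q)$, whereas the paper re-derives the same inequality via $\pm\begin{bmatrix}0&y\\y^*&0\end{bmatrix}\leq\left\vert\begin{bmatrix}0&y\\y^*&0\end{bmatrix}\right\vert_{l+m}=p\circledast q\leq x^l\circledast x^m$.
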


\begin{proof}
Suppose that $p \sim q$ in $\mathbb{X}$ for some $p \in \mathbb{OP}_l(\mathbb{X}_x)$ and $q \in \mathbb{OP}_m(\mathbb{X}_x).$ There exists $y \in \mathbb{M}_{l,m}(\mathbb{X})$ satisfying $p = \vert y^* \vert_{m,l}$ and $q = \vert y \vert_{l,m}.$ For $\Vert p\Vert\leq 1$ and $\Vert q\Vert \leq 1,$ we have $p\leq x^l$ and $q\leq x^m.$ Then
\begin{eqnarray*}
\begin{bmatrix} x^l  & 0 \\ 0 & x^m \end{bmatrix} &\geq & \begin{bmatrix} p & 0 \\ 0 & q \end{bmatrix} \\
&=& \begin{bmatrix}\vert y^* \vert_{m,l} & 0 \\ 0 &  \vert y \vert_{l,m} \end{bmatrix} \\
&=& \left \vert \begin{bmatrix} 0 & y \\ y^* & 0\end{bmatrix} \right \vert_{l+m} \\
&\ge& \pm \begin{bmatrix} 0 & y \\ y^* & 0 \end{bmatrix}
\end{eqnarray*}

so that $\begin{bmatrix} x^l & y \\ y^* & x^m \end{bmatrix} \in \mathbb{M}_{l+m}(\mathbb{X})^+.$ Thus $y \in \mathbb{M}_{l,m}(\mathbb{X}_x)$ and consequently $p \sim q$ in $\mathbb{X}_x.$

Next, $p \sim q$ in $\mathbb{X}_x$ implies $p \sim q$ in $\mathbb{X}$ follows trivially. 
\end{proof}

The corresponding characterization for partial isometrises of $\mathbb{X}_x$ is also given in terms of partial isometrises of $\mathbb{X}$ by the following result.

\begin{corollary}\label{3}
In an absolute matrix order unit space $\mathbb{X},$ let $x \in \mathbb{X}^+$ such that $\Vert x\Vert=1.$ If $y \in \mathbb{PI}_{l,m}(\mathbb{X})$ such that $\vert y\vert_{l,m}\in \mathbb{OP}_m(\mathbb{X}_x)$ and $\vert y^*\vert_{m,l}\in \mathbb{OP}_l(\mathbb{X}_x),$ then $x\in \mathbb{PI}_{l,m}(\mathbb{X}_x).$ In this case, $\mathbb{PI}_{l,m}(\mathbb{X}_x)=\mathbb{M}_{l,m}(\mathbb{X}_x)\cap \mathbb{PI}_{l,m}(\mathbb{X}).$ In particular, $\mathbb{PI}_{l,m}(\mathbb{X}_p)=\mathbb{M}_{l,m}(\mathbb{X}_p)\cap \mathbb{PI}_{l,m}(\mathbb{X})$ for every $p\in \mathbb{OP}(\mathbb{X})\setminus \lbrace 0\rbrace.$ 
\end{corollary}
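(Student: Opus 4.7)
The statement naturally splits into three parts: (i) the main implication that $y \in \mathbb{PI}_{l,m}(\mathbb{X}_x)$, (ii) the equality $\mathbb{PI}_{l,m}(\mathbb{X}_x) = \mathbb{M}_{l,m}(\mathbb{X}_x) \cap \mathbb{PI}_{l,m}(\mathbb{X})$, and (iii) the case where $x = p \in \mathbb{OP}(\mathbb{X}) \setminus \lbrace 0 \rbrace$. Since the hypotheses already place $|y|_{l,m}$ and $|y^{*}|_{m,l}$ in the order projections of $\mathbb{X}_x$, to obtain $y \in \mathbb{PI}_{l,m}(\mathbb{X}_x)$ the only missing ingredient is membership $y \in \mathbb{M}_{l,m}(\mathbb{X}_x)$. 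My plan is to mirror the key step in the proof of Proposition \ref{2}.

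Concretely, the preceding Lemma gives $|y^{*}|_{m,l} \leq x^l$ and $|y|_{l,m} \leq x^m$, while Proposition \ref{9}(3) yields positivity of $\begin{bmatrix} |y^{*}|_{m,l} & y \\ y^{*} & |y|_{l,m} \end{bmatrix}$. Adding the (clearly positive) block-diagonal matrix $\begin{bmatrix} x^l - |y^{*}|_{m,l} & 0 \\ 0 & x^m - |y|_{l,m} \end{bmatrix}$ produces $\begin{bmatrix} x^l & y \\ y^{*} & x^m \end{bmatrix} \in \mathbb{M}_{l+m}(\mathbb{X})^{+}$. Theorem \ref{4}(1), applied with $\epsilon = 1$, then delivers $y \in \mathbb{M}_{l,m}(\mathbb{X}_x)$, which together with the hypotheses closes part (i).

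For part (ii), the inclusion ``$\subseteq$'' is immediate from the preceding Lemma, which shows that every order projection in $\mathbb{X}_x$ is already an order projection in $\mathbb{X}$. For ``$\supseteq$'', given $y \in \mathbb{M}_{l,m}(\mathbb{X}_x) \cap \mathbb{PI}_{l,m}(\mathbb{X})$, a flip conjugation applied to the defining $2\times 2$ block matrix in Theorem \ref{4}(1) yields $y^{*} \in \mathbb{M}_{m,l}(\mathbb{X}_x)$; Theorem \ref{4}(5) then places both $|y|_{l,m}$ and $|y^{*}|_{m,l}$ in the positive cones of $\mathbb{X}_x$, and combined with their being order projections in $\mathbb{X}$, the preceding Lemma lifts them to $\mathbb{OP}_m(\mathbb{X}_x)$ and $\mathbb{OP}_l(\mathbb{X}_x)$ respectively. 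Part (i) then completes the argument. Part (iii) reduces immediately to parts (i)--(ii) by invoking Proposition \ref{1}, which guarantees $\|p\| = 1$ for any nonzero $p \in \mathbb{OP}(\mathbb{X})$.

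The only nontrivial bookkeeping is the symmetry step needed for ``$\supseteq$'', namely deducing $y^{*} \in \mathbb{M}_{m,l}(\mathbb{X}_x)$ from $y \in \mathbb{M}_{l,m}(\mathbb{X}_x)$; this is dispatched by conjugating with the permutation $\begin{bmatrix} 0 & \mathbb{I}_m \\ \mathbb{I}_l & 0 \end{bmatrix}$, after which the remainder is a direct combination of the preceding Lemma with Theorem \ref{4}.
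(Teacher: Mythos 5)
Your proof is correct and follows essentially the same route as the paper: the paper simply sets $p=\vert y^*\vert_{m,l}$, $q=\vert y\vert_{l,m}$ and cites Proposition \ref{2}, whose proof contains exactly the block-matrix positivity argument you spell out (showing $\begin{bmatrix} x^l & y \\ y^* & x^m\end{bmatrix}\in\mathbb{M}_{l+m}(\mathbb{X})^+$ to conclude $y\in\mathbb{M}_{l,m}(\mathbb{X}_x)$). Your write-up is in fact a bit more careful than the paper's, since you verify explicitly that the witness $y$ itself lies in $\mathbb{M}_{l,m}(\mathbb{X}_x)$ and you treat the set equality and the adjoint-closure step directly rather than leaving them implicit.
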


\begin{proof}
Let $y \in \mathbb{PI}_{l,m}(\mathbb{X})$ such that $\vert y\vert_{l,m}\in \mathbb{OP}_m(\mathbb{X}_x)$ and $\vert y^*\vert_{m,l}\in \mathbb{OP}_l(\mathbb{X}_x).$ Put $p=\vert y^*\vert_{m,l}$ and $q=\vert y\vert_{l,m}.$ Then $p\sim q$ in $\mathbb{X}_x.$ Now the result follows immediately from the Proposition \ref{2}.
\end{proof}

The condition $(T)$ is also defined in \cite{PI19}. Under $(T),$ the relation $\sim$ becomes an equivalence relation on order projections. In the next result, it is shown that $(T)$ is tranfered on $\mathbb{X}_x$ as a heredity. 

\begin{proposition}
Let $\mathbb{X}$ an absolute matrix order unit space and let $x \in \mathbb{X}^+$ such that $\Vert x\Vert=1.$ Then (T) holds in $\mathbb{X}$ implies (T) holds in $\mathbb{X}_x.$ In particular, (T) holds in $\mathbb{X}$ implies (T) holds in $\mathbb{X}_p$ for every $p\in \mathbb{OP}(\mathbb{X})\setminus \lbrace 0\rbrace.$
\end{proposition}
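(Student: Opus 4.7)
The plan is to leverage Proposition \ref{2}, which already establishes that partial isometric equivalence $\sim$ on $\mathbb{OP}_\infty(\mathbb{X}_x)$ coincides with the restriction of $\sim$ from $\mathbb{OP}_\infty(\mathbb{X})$. Since $\sim$ is automatically reflexive and symmetric on order projections (the identity witnesses reflexivity and swapping adjoints witnesses symmetry), condition (T) amounts to transitivity of $\sim$, and transitivity is exactly what I will transfer.

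Concretely, I would take $p \in \mathbb{OP}_l(\mathbb{X}_x)$, $q \in \mathbb{OP}_m(\mathbb{X}_x)$, and $r \in \mathbb{OP}_n(\mathbb{X}_x)$ with $p \sim q$ and $q \sim r$ in $\mathbb{X}_x$. Two applications of Proposition \ref{2} (in the $(p,q)$ pair and the $(q,r)$ pair) lift these equivalences to $\mathbb{X}$. Since (T) is assumed in $\mathbb{X}$, transitivity of $\sim$ on $\mathbb{OP}_\infty(\mathbb{X})$ yields $p \sim r$ in $\mathbb{X}$. Because $p$ and $r$ already live in $\mathbb{OP}_\infty(\mathbb{X}_x)$, a final application of Proposition \ref{2} pushes the equivalence back down into $\mathbb{X}_x$. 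This closes the transitivity check, so (T) holds in $\mathbb{X}_x$.

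The ``in particular'' clause is immediate: for $p \in \mathbb{OP}(\mathbb{X}) \setminus \{0\}$, Proposition \ref{1} gives $\|p\| = 1$, so the corollary following Proposition \ref{1} makes $\mathbb{X}_p$ an absolute matrix order unit ideal, and the previous argument applies with $x := p$.

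I do not foresee a genuine obstacle, since the entire combinatorial content is already packaged in Proposition \ref{2}: (T), being an axiom on a relation, is automatically inherited by any substructure on which the induced and ambient relations agree. The only subtle point is that Proposition \ref{2} requires the witnessing element $y \in \mathbb{M}_{l,n}(\mathbb{X})$ for $p \sim r$ to actually lie in $\mathbb{M}_{l,n}(\mathbb{X}_x)$; but the proof of Proposition \ref{2} already dealt with exactly this (via the normalization $\|p\|, \|r\| \le 1$ and the bound $p \le x^l$, $r \le x^n$), so no new argument is required.
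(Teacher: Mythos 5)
Your proposal is correct and follows essentially the same route as the paper: the paper verifies (T) in $\mathbb{X}_x$ directly by applying (T) in $\mathbb{X}$ to the two given partial isometries and then using Corollary \ref{3} (itself a consequence of Proposition \ref{2}) to see that the witnessing partial isometry $w$, whose associated order projections already lie in $\mathbb{OP}(\mathbb{X}_x)$, belongs to $\mathbb{M}(\mathbb{X}_x)$. Your reformulation of (T) as transitivity of $\sim$ and the three applications of Proposition \ref{2} package exactly the same content.
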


\begin{proof}
Assume that (T) holds in $\mathbb{X}.$ Let $y \in \mathbb{PI}_{m,n}(\mathbb{X}_x)$ and $z \in \mathbb{PI}_{l,n}(\mathbb{X}_x)$ satisfying $\vert y \vert_{m,n} = \vert z \vert_{l,n}.$ Then there exists $w \in \mathbb{PI}_{m,l}(\mathbb{X})$ such that $\vert w^* \vert_{l,m} = \vert y^* \vert_{n,m}$ and $\vert w \vert_{m,l} = \vert z^* \vert_{n,l}.$ Note that $\vert w \vert_{m,l}\in\mathbb{OP}_l(\mathbb{X}_x)$ and $\vert w^* \vert_{l,m}\in \mathbb{OP}_m(\mathbb{X}_x).$ By Corollary \ref{3}, we get $w \in \mathbb{PI}_{m,l}(\mathbb{X}_x).$ Thus (T) also holds in $\mathbb{X}_x.$
\end{proof}

Finally, we derive a relation between $\mathcal{K}_0(\mathbb{X}_x)$ and $\mathcal{K}_0(\mathbb{X})$ by the following result.

\begin{theorem}\label{14}
Let $(T)$ holds in an absolute matrix order unit space $\mathbb{X}$ and $x\in \mathbb{X}^+$ such that $\Vert x\Vert=1.$ Then $[(p,q)] \longmapsto [(p,q)]$ defines a group homomorphism  $\mathcal{K}_0(\mathbb{X}_x)$ to $\mathcal{K}_0(\mathbb{X}).$ Moreover, if $p\approx q$ in $\mathbb{OP}_\infty(\mathbb{X})$ implies $p\approx q$ in $\mathbb{OP}_\infty(\mathbb{X}_x)$ for every pair of $p,q\in \mathbb{OP}_\infty(\mathbb{X}_x),$ then this group homomorphism turns out to be an injection.
\end{theorem}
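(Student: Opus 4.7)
The plan is to verify, in order: (a) that the formula $(p,q)\mapsto(p,q)$ gives a well-defined map $\Phi\colon\mathcal{K}_0(\mathbb{X}_x)\to\mathcal{K}_0(\mathbb{X})$; (b) that $\Phi$ respects the group operation; and (c) that under the extra hypothesis in the ``moreover'' part, $\Phi$ is injective. The engine driving everything is Proposition \ref{2}, which identifies the equivalence $\sim$ on $\mathbb{OP}_\infty(\mathbb{X}_x)$ with the restriction of $\sim$ on $\mathbb{OP}_\infty(\mathbb{X})$.

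For well-definedness, I would first note that by Theorem \ref{4}, $\mathbb{X}_x$ is an absolute matrix order unit ideal of $\mathbb{X}$ and, using the preceding lemma identifying $\mathbb{OP}_l(\mathbb{X}_x)$ with $\mathbb{OP}_l(\mathbb{X})\cap\mathbb{M}_l(\mathbb{X}_x)$, each representative pair $(p,q)$ of a class in $\mathcal{K}_0(\mathbb{X}_x)$ is also a legitimate pair of order projections in $\mathbb{X}$. Next, if $[(p_1,q_1)]=[(p_2,q_2)]$ in $\mathcal{K}_0(\mathbb{X}_x)$, then by the construction of the Grothendieck group in \cite[Theorem 4.8]{KO21} there exists $r\in\mathbb{OP}_\infty(\mathbb{X}_x)$ with $p_1\circledast q_2\circledast r\approx p_2\circledast q_1\circledast r$ in $\mathbb{X}_x$. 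Since $\approx$ is the stabilisation of $\sim$, Proposition \ref{2} transfers this equivalence from $\mathbb{X}_x$ to $\mathbb{X}$, so the same identity holds in $\mathcal{K}_0(\mathbb{X})$, giving well-definedness.

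For the homomorphism property, I would observe that the monoid structure on $\mathbb{OP}_\infty$ is given by $\circledast$ in both cases, and the group operation on $\mathcal{K}_0$ is its Grothendieck extension, so $\Phi$ automatically preserves sums and the identity class because it is literally the identity on representatives. For injectivity, suppose $[(p,q)]\in\ker\Phi$ for some $p,q\in\mathbb{OP}_\infty(\mathbb{X}_x)$; then $p\approx q$ in $\mathbb{OP}_\infty(\mathbb{X})$, and the ``moreover'' hypothesis is precisely what is needed to upgrade this to $p\approx q$ in $\mathbb{OP}_\infty(\mathbb{X}_x)$, so $[(p,q)]=0$ in $\mathcal{K}_0(\mathbb{X}_x)$.

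The main obstacle is largely bookkeeping rather than mathematics: one has to keep straight which direction of Proposition \ref{2} is being used at each step (the automatic direction $\mathbb{X}_x\Rightarrow\mathbb{X}$ gives well-definedness, while the converse $\mathbb{X}\Rightarrow\mathbb{X}_x$ is precisely the extra assumption that delivers injectivity), and to check that the stabilisation step in passing from $\sim$ to $\approx$ goes through under the same logic, since the stabilising projection $r$ a priori lives in $\mathbb{OP}_\infty(\mathbb{X}_x)$, which is exactly what is needed to apply Proposition \ref{2}.
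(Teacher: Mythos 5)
Your proof is correct, but it takes a more hands-on route than the paper. The paper's proof delegates both well-definedness and the homomorphism property to the functoriality result \cite[Theorem 5.2]{KO21}: it applies $\mathcal{K}_0$ to the inclusion $i\colon\mathbb{X}_x\hookrightarrow\mathbb{X}$ to obtain a group homomorphism $\mathcal{K}_0(i)$ fitting into a commutative square with the universal maps $\chi_{\mathbb{X}_x}$ and $\chi_{\mathbb{X}}$, and then simply computes $\mathcal{K}_0(i)([(p,q)])=\chi_{\mathbb{X}}(p)-\chi_{\mathbb{X}}(q)=[(p,q)]$. You instead unwind the Grothendieck construction explicitly: equality of classes in $\mathcal{K}_0(\mathbb{X}_x)$ means $p_1\circledast q_2\circledast r\approx p_2\circledast q_1\circledast r$ for some stabilising $r\in\mathbb{OP}_\infty(\mathbb{X}_x)$, and the easy direction of Proposition \ref{2} (together with the fact that the zero-padding in the definition of $\approx$ stays inside $\mathbb{OP}_\infty(\mathbb{X}_x)$, which you rightly flag) pushes this relation into $\mathbb{X}$. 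What your approach buys is self-containedness and a small robustness advantage: the inclusion $\mathbb{X}_x\hookrightarrow\mathbb{X}$ does not carry the order unit $x$ to $e$, so invoking the covariant-functor theorem requires one to check that it is a morphism of the kind that theorem covers, a point the paper passes over silently and which your direct argument never needs. What the paper's approach buys is brevity and the uniqueness statement for the induced homomorphism, which comes for free from the universal property. Your injectivity argument is the same as the paper's: $[(p,q)]=0$ in $\mathcal{K}_0(\mathbb{X})$ gives $p\approx q$ in $\mathbb{OP}_\infty(\mathbb{X})$ (after the standard padding), and the ``moreover'' hypothesis pulls this back to $\mathbb{X}_x$.
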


\begin{proof}
Consider the inclusion map $i: \mathbb{X}_x \hookrightarrow \mathbb{X}.$ By \cite[Theorem 5.2]{KO21}, there exists a unique group homomorphism $\mathcal{K}_0(i): \mathcal{K}_0(\mathbb{X}_x) \to \mathcal{K}_0(\mathbb{X})$ satisfying the following commutative diagram: 

$$\begin{tikzcd}
\mathbb{OP}_\infty(\mathbb{X}_x)\arrow[r, hook, "i"]\arrow[swap]{d}{\chi_{\mathbb{X}_x}}
& \mathbb{OP}_\infty(\mathbb{X})\arrow{d}{\chi_\mathbb{X}} \\
\mathcal{K}_0(\mathbb{X}_x)\arrow[swap]{r}{\mathcal{K}_0(i)}
& \mathcal{K}_0(\mathbb{X})
\end{tikzcd}$$ so that $$\mathcal{K}_0(i)(\chi_{\mathbb{X}_x}(p))=\chi_\mathbb{X}(p)~\textrm{for every}~p \in \mathbb{OP}_\infty(\mathbb{X}_x).$$ 

Now, for $[(p,q)] \in \mathcal{K}_0(\mathbb{X}_x),$ we have: 

\begin{eqnarray*}
\mathcal{K}_0(i)([(p,q)]) &=& \mathcal{K}_0(i)([(p,0)])-[(0,q)])\\
&=& \mathcal{K}_0(i)(\chi_{\mathbb{X}_x}(p)-\chi_{\mathbb{X}_x}(q)) \\
&=& \mathcal{K}_0(i)(\chi_{\mathbb{X}_x}(p))-\mathcal{K}_0(i)(\chi_{\mathbb{X}_x}(q)) \\
&=& \chi_\mathbb{X}(p)-\chi_\mathbb{X}(q) \\
&=& [(p,q)].
\end{eqnarray*}

Thus $[(p,q)] \longmapsto [(p,q)]$ defines a group homomorphism from $\mathcal{K}_0(\mathbb{X}_x)$ to $\mathcal{K}_0(\mathbb{X})$. 

Next, let $p,q \in \mathbb{OP}_\infty(\mathbb{X}_x)$ such that $[(p,q)]=0$ in $\mathbb{X}.$ In this case, we have $p\oplus 0\approx 0\oplus q$ in $\mathbb{X}.$ By assumption, we conclude that $p\oplus 0\approx 0\oplus q$ in $\mathbb{X}_x.$ Thus $[(p,q)]=0$ in $\mathbb{X}_x.$ Hence the group homomorphism is injective.
\end{proof}

\begin{corollary}
Let $(T)$ holds in an absolute matrix order unit space $\mathbb{X}$ and $p\in \mathbb{OP}(\mathbb{X})\setminus \lbrace 0\rbrace.$ Then $[(q,r)] \longmapsto [(q,r)]$ defines a group homomorphism  $\mathcal{K}_0(\mathbb{X}_p)$ to $\mathcal{K}_0(\mathbb{X}).$ Moreover, if $q\approx r$ in $\mathbb{X}$ implies $q\approx r$ in $\mathbb{X}_p$ for every pair $q,r\in \mathbb{OP}_\infty(\mathbb{X}_p),$ then this group homomorphism turns out to be an injection.
\end{corollary}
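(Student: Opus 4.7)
The plan is to realize the claimed group homomorphism as $\mathcal{K}_0(i)$, the functorially induced map coming from the inclusion $i:\mathbb{X}_x\hookrightarrow\mathbb{X}$. By Theorem \ref{4}, $\mathbb{X}_x$ is an absolute matrix order unit ideal of $\mathbb{X}$ whose positive cones and absolute value maps are obtained by restriction and whose matrix norm (relative to $x$) coincides with the ambient matrix norm. Hence $i$ is a completely absolute value preserving map, and the functoriality of $\mathcal{K}_0$ from \cite[Theorem 5.2]{KO21} yields a unique group homomorphism $\mathcal{K}_0(i):\mathcal{K}_0(\mathbb{X}_x)\to\mathcal{K}_0(\mathbb{X})$ fitting into the commutative square with the canonical maps $\chi_{\mathbb{X}_x}$ and $\chi_\mathbb{X}$.

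Next, I would check the claimed formula. For $[(p,q)]\in\mathcal{K}_0(\mathbb{X}_x)$, write $[(p,q)]=\chi_{\mathbb{X}_x}(p)-\chi_{\mathbb{X}_x}(q)$; applying $\mathcal{K}_0(i)$ and using the commuting square gives $\chi_\mathbb{X}(p)-\chi_\mathbb{X}(q)=[(p,q)]$ in $\mathcal{K}_0(\mathbb{X})$. This is a one-line formal computation showing that $\mathcal{K}_0(i)$ is literally the map $[(p,q)]\mapsto [(p,q)]$.

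For the moreover clause I would argue by unpacking the vanishing condition. If $[(p,q)]=0$ in $\mathcal{K}_0(\mathbb{X})$ with $p,q\in \mathbb{OP}_\infty(\mathbb{X}_x)$, then by the Grothendieck construction of \cite{KO21} applied to the equivalence $\approx$, this is exactly the statement $p\oplus 0\approx 0\oplus q$ in $\mathbb{OP}_\infty(\mathbb{X})$. Lemma \ref{6} ensures $p\oplus 0$ and $0\oplus q$ still lie in $\mathbb{OP}_\infty(\mathbb{X}_x)$, so the assumed transfer of $\approx$ from $\mathbb{X}$ to $\mathbb{X}_x$ gives $p\oplus 0\approx 0\oplus q$ in $\mathbb{OP}_\infty(\mathbb{X}_x)$, hence $[(p,q)]=0$ in $\mathcal{K}_0(\mathbb{X}_x)$. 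Injectivity follows.

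The main obstacle I anticipate is the very first step: making sure that the inclusion $i$ falls inside the class of morphisms for which \cite[Theorem 5.2]{KO21} produces a functorial induced map. Since the order unit of $\mathbb{X}_x$ is $x$ and not $e$, the inclusion is not unital in the usual sense, so I need the functoriality result to be set up for (possibly non-unital) completely absolute value preserving maps, with the $\approx$ classes transported correctly through partial isometries whose absolute values already sit in $\mathbb{X}_x$ by Corollary \ref{3}. Once this categorical bookkeeping is in hand, every subsequent step—computing $\mathcal{K}_0(i)$ on a representative and reading off injectivity from the hypothesis—is routine.
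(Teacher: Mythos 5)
Your proposal is correct and follows essentially the same route as the paper: the paper states this corollary as an immediate specialization of Theorem \ref{14} to $x=p$ (legitimate since $\Vert p\Vert=1$ by Proposition \ref{1}), and your argument is precisely the paper's proof of that theorem --- the inclusion $i:\mathbb{X}_p\hookrightarrow\mathbb{X}$ plus the functoriality result \cite[Theorem 5.2]{KO21} giving $\mathcal{K}_0(i)$, the commuting-square computation showing it acts as $[(q,r)]\mapsto[(q,r)]$, and the unwinding of $[(q,r)]=0$ as $q\oplus 0\approx 0\oplus r$ for injectivity. The non-unitality concern you flag at the end is real but is passed over in the paper as well, so your treatment matches the source.
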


\end{document}